
\documentclass[11pt]{amsart}
\usepackage{amsmath,amssymb}
\usepackage{color}
\usepackage[all,curve,frame]{xy}

\setcounter{MaxMatrixCols}{10}

\vfuzz2pt \hfuzz2pt
\usepackage[pdftex]{graphicx}

\newtheorem{defi}{Definition}[section]
\newtheorem{sinobservacion}[defi]{}
\newenvironment{sinob}{\begin{sinobservacion} \rm}{\end{sinobservacion} }
\newtheorem{coro}[defi]{Corollary}
\newtheorem{lem}[defi]{Lemma}
\newtheorem{obs}[defi]{Remark}
\newtheorem{nota}[defi]{Notation}
\newtheorem{prop}[defi]{Proposition}
\newtheorem{teo}[defi]{Theorem}

\newcommand{\benu}{\begin{enumerate}}
\newcommand{\enu}{\end{enumerate}}
\newcommand{\fle}{\rightarrow}

\newcommand{\C}{\mathcal{C}}
\newcommand{\D}{\mathcal{D}}
\newcommand{\til}{\widetilde}
\newcommand{\phii}{\varphi}
\newcommand{\mbmA}{\mbox{mod}\,A}
\newcommand{\emmA}{\emph{mod}\,A}
\newcommand{\mbmH}{\mbox{mod}\,H}

\newcommand{\mbmG}{\mbox{mod}\,\Gamma}
\newcommand{\emmG}{\emph{mod}\,\Gamma}
\newcommand{\mbiG}{\mbox{ind}\,\Gamma}
\newcommand{\emiG}{\emph{ind}\,\Gamma}
\newcommand{\mbiA}{\mbox{ind}\,A}

\newcommand{\mbeCT}{\mbox{End}_\mathcal{C}(T)^{op}}
\newcommand{\emeCT}{\emph{End}_\mathcal{C}(T)^{op}}

\newcommand{\al}{\alpha}
\newcommand{\Gaa}{\Gamma_A}

\textheight 8.55 in

\setlength{\textwidth}{6.50in}       

\oddsidemargin=0.08in
\setlength{\evensidemargin}{0.08in}

\topmargin -0.1 in

\begin{document}

\title[On the radical of Cluster tilted algebras]{On the radical of Cluster tilted algebras}
\author[Chaio Claudia]{Claudia Chaio}
\address{Centro Marplatense de Investigaciones Matem\'aticas, Facultad de Ciencias Exactas y
Naturales, Funes 3350, Universidad Nacional de Mar del Plata y CONICET 7600 Mar del
Plata, Argentina}
\email{claudia.chaio@gmail.com}

\author[Guazzelli Victoria]{Victoria Guazzelli }
\address{Centro Marplatense de Investigaciones Matem\'aticas, Facultad de Ciencias Exactas y
Naturales, Funes 3350, Universidad Nacional de Mar del Plata, 7600 Mar del
Plata, Argentina}
\email{victoria.guazzelli@hotmail.com}
\keywords{irreducible morphisms, radical, projective cover, injective hull.}
\subjclass[2000]{16G70, 16G20, 16E10}

\maketitle

\begin{abstract} We determine the minimal lower bound $n$, with  $n \geq 1$, where the $n$-th power of the radical of the module category of a representation-finite
cluster tilted algebra  vanishes. We give such a bound in terms of
the number of  vertices of the underline quiver. Consequently, we get the nilpotency index  of the radical of the module category
for representation-finite self-injective cluster tilted algebras.
We also study the non-zero composition of $m$, $m \ge 2$, irreducible morphisms
between indecomposable modules in representation-finite cluster tilted algebras lying in the $(m+1)$-th power of the radical of their module
category.
\end{abstract}

\section*{Introduction}

Let $A$ be a finite dimensional algebra over an algebraically closed field.
The representation theory of an algebra $A$
deals with the study of the module category of
finitely generated $A$-modules, $\mbox{mod}\,A$. A fundamental tool in
the study of $\mbox{mod}\,A$ is the Auslander-Reiten theory, based
on irreducible morphisms and almost split sequences.

For $X,Y \in \mbox{mod}\;A$,
we denote by $\Re(X,Y)$ the set of
all morphisms $f: X \rightarrow Y$ such that, for all
indecomposable $A$-module $M$, each pair of morphisms $h:M
\rightarrow X$ and $h':Y\rightarrow M$ the composition $h'fh$ is not an isomorphism. Inductively, the powers of
$\Re(X,Y)$ are defined.

There is a close connection between irreducible morphisms and the
powers of the radical, given by a well-known  result proved by R. Bautista which states that if $f:X \rightarrow Y$ is a morphism between indecomposable $A$-modules then  $f$ is irreducible if and only if $f\in \Re (X, Y) \backslash \Re^{2}(X, Y)$,  see \cite{B}.

In case that $\Re^{n}(M,N)=0$
for some positive integer $n$ and for all $M$ and $N$ in $\mbox{mod}\;A$, we write this fact by the expression $\Re^{n}(\mbox{mod}\;A)=0$.
We recall that an algebra $A$ is representation-finite (or of finite representation type) if and
only if there is a positive integer ${n}$ such that  $\Re^{n}(\mbox{mod}\;A)$ vanishes,  (see \cite{ARS} p. 183).

In \cite{L}, S. Liu defined the notion of degree of an irreducible morphism (see \ref{nilpo}) which has been a powerful tool to study, between others problems, the one concerning the nilpotency of the radical of a module category of an algebra $A$, in case that we deal with finite-dimensional $k$-algebras over an algebraically closed field of finite representation type.

If $A$ is a finite dimensional basic algebra over an algebraically
closed field then we know that  $A\simeq kQ_A/I_A$. In addition,  if $A$ is representation-finite
then by \cite{CLT} we know  that all
irreducible epimorphisms and all irreducible monomorphisms are of
finite left and right degree, respectively. In particular, the
irreducible monomorphism $\iota_a: \mbox{rad} (P_a)\hookrightarrow
P_a$ where $P_a$ is the projective module corresponding to the
vertex $a$ in $Q_A$, has finite right degree. Dually, the
irreducible epimorphism $g_a:I_a \rightarrow
I_a/\mbox{soc}(I_a)$ where $I_a$ is the injective module
corresponding to the vertex $a$ in $Q_A,$ has finite left degree.
We denote by $S_a$ the simple $A$-module corresponding to the
vertex $a$ in $Q_A.$

By \cite{C}, we  know that for a finite
dimensional algebra over an algebraically closed field  $A\simeq kQ_A/I_A$ where $A$ is representation-finite
we can compute the nilpotency index $r_A$
of $\Re(\mbox{mod}\; A)$ by
$\mbox{max}\{r_a\}_{a\in (Q_A)_{0}}+1$ where
$r_a$ is equal to the length of any path of irreducible
morphisms between indecomposable modules from the projective $P_a$ to the injective
$I_a$, going through the simple $S_a$.
\vspace{.05in}

Applying the above mentioned result we give the minimal positive integer $m$ such
that $\Re(\mbox{mod}\;\Gamma)$ vanishes, where $\Gamma$ is a cluster tilted algebra
of type $\overline{\Delta}$, with ${\Delta}$ a Dynkin quiver. More precisely,  we prove Theorem A and B.
\vspace{.1in}

{\bf Theorem A.} {\it Let $\C$ be the cluster category of a representation-finite hereditary algebra $H$.
let $\overline{T}$ be an almost complete tilting object in $\C$ with complements $M$ and $M^*$.
Consider $\Gamma=\emph{End}_{\C}({T})^{op}$ and
$\Gamma'=\emph{End}_{\C}({T'})^{op}$
the cluster tilted algebras with  $T=\overline{T}\oplus M$ and
$T'=\overline{T}\oplus M^*$.  Let $r_{\Gamma}$ and $r_{\Gamma'}$
be the nilpotency indices of $\Re(\emmG)$ and $\Re(\emmG')$, respectively.
Then, $r_{\Gamma}=r_{\Gamma'}$.}
\vspace{.1in}

{\bf Theorem B.} {\it
Let ${\Delta}$ be a Dynkin quiver and let $\Gamma$ be a cluster tilted algebra
of type $\overline{\Delta}$. Let $r_{\Gamma}$ be  the nilpotency index of $\Re(\emmG)$. Then the  following conditions hold.
\begin{enumerate}
\item[(a)] If $\overline{\Delta}=A_n$, then $r_{\Gamma}=n$ for $n\geq 1$.
\item[(b)] If $\overline{\Delta}=D_n$, then $r_{\Gamma}=2n-3$ for $n\geq 4$.
\item[(c)] If $\overline{\Delta}=E_6$, then $r_{\Gamma}=11$.
\item[(d)] If $\overline{\Delta}=E_7$, then $r_{\Gamma}=17.$
\item[(e)] If $\overline{\Delta}=E_8$, then $r_{\Gamma}=29.$
\end{enumerate}}
\vspace{.1in}

We observe that the nilpotency index of the radical of the module category of a cluster tilted algebra
of type $\overline{\Delta}$ with ${\Delta}$ a Dynkin quiver, coincide with the nilpotency index of the radical of the module category of the hereditary algebra $k\Delta$.
\vspace{.05in}

The non-zero composition of $n$
irreducible morphisms between indecomposable modules could belong to $\Re^{n+1}$. In the last years, there have been many works done in this direction. The first to give a partial solution to that problem were K. Igusa and G. Todorov in \cite{I-T},  where they proved that if
$X_0\xrightarrow{f_1}X_1\to\cdots\to X_{n-1}\xrightarrow{f_n}X_n$
is a sectional path then $f_n \dots f_1$ lies in
$\Re^n(X_0,X_n)$ but not in $\Re^{n+1}(X_0,X_n)$.

In \cite{CCT}, F. U. Coelho, S. Trepode and the first named author characterized when the
composition of two irreducible morphisms is  non-zero  and lies
in $\Re^3(\mbox{mod}\;A)$ for $A$ an artin algebra. In
\cite{CLT2}, P. Le Meur, S. Trepode and the first named author solved the problem of when the composition of $n$ irreducible morphisms between indecomposable modules is non-zero and belongs to $\Re^{n+1}(\mbox{mod}\;A)$ for finite dimensional $k$-algebras over a perfect field $k$.

As a  consequence of the results of this work, we obtain  when the composition of $n$ irreducible morphisms between indecomposable $A$-modules belongs to the $n+1$ power of the radical of their module category, for a representation-finite cluster tilted algebra $A$.  More precisely, we prove  Theorem C.
\vspace{.1in}

{\bf Theorem C.} {\it Let $\Gamma$ be a representation-finite cluster tilted algebra. Consider the  irreducible morphisms
$h_i:X_i\fle X_{i+1}$, with $X_i\in \emiG$ for $1\leq i\leq m$.
Then $h_{m}\dots h_{1}\in \Re^{m+1}(X_{1}, X_{m+1})$ if and only if $ h_{m}\dots h_{1} = 0$.}
\vspace{.1in}

\thanks{The authors thankfully acknowledge partial support from
CONICET and Universidad Nacional de Mar del Plata, Argentina.
The authors also thanks Ana Garcia Elsener for useful conversations.
The first author is a CONICET researcher.}

\section{Preliminaries}

Throughout this work, by an algebra we mean a finite dimensional basic $k$-algebra over an algebraically closed field,  $k$.

\vspace{.1in}
\begin{sinob} {\bf Notions on quivers and algebras}
\vspace{.1in}

A {\it quiver} $Q$ is given by a set of vertices $Q_0$ and
a set of arrows $Q_1$, together with two
maps $s,e:Q_1\fle Q_0$. Given an arrow $\al\in Q_1$, we write $s(\al)$
the starting vertex of $\al$ and $e(\al)$
the ending vertex of $\al$. We denote by $\overline{Q}$ the underlying graph of $Q$.
For each algebra $A$ there is a quiver $Q_A$, called the {\it ordinary quiver of}  $A$, such that
$A$ is the quotient of the path algebra $kQ_A$ by an admissible ideal.

Let $A$ be an algebra. We denote by $\mbox{mod}\,A$ the category of finitely generated
left $A$-modules and  by $\mbiA$ the full subcategory of $\mbmA$ which consists of
one representative of each isomorphism class of indecomposable $A$-modules.

We say that $A$ is a \textit{representation-finite algebra} if there is only a finite number
of isomorphisms classes of indecomposable A-modules.

We denote by $\Gaa$ the Auslander-Reiten
quiver of $\mbmA$, and  by $\tau$ the Auslander-Reiten translation DTr with inverse TrD denoted by $\tau^{-1}$.
\end{sinob}
\vspace{.1in}

\begin{sinob}{\bf On the radical of a module category}
\vspace{.1in}

A morphism $f :X  \rightarrow  Y$, with $X,Y \in \mbox{ mod}\,A$,
is called {\it irreducible} provided it does not split
and whenever $f = gh$, then either $h$ is a split monomorphism or $g$ is a
split epimorphism.

If $X,Y \in \mbox{mod}\,A$, the ideal $\Re(X,Y)$ of $\mbox{Hom}(X,Y)$ is
the set of all the morphisms $f: X \rightarrow Y$ such that, for each
$M \in \mbox{ind}\,A$, each $h:M \rightarrow X$ and each $h^{\prime }:Y\rightarrow M$
the composition $h^{\prime }fh$ is not an isomorphism. For $n \geq 2$, the powers of $\Re(X,Y)$
are inductively defined. By $\Re^\infty(X,Y)$ we denote the intersection
of all powers $\Re^i(X,Y)$ of $\Re(X,Y)$, with $i \geq 1$.

By \cite{B}, it is known that for $X,Y \in \mbox{ind}\,A$, a morphism $f :X  \rightarrow  Y$ 
is irreducible if and only if $f \in \Re(X,Y)\setminus \Re^2(X,Y)$.

 We recall the next  proposition fundamental for our results.
\begin{prop}\cite[V, Proposition 7.4]{ARS}\label{aus}
Let $M$ and $N$ be indecomposable modules in $\emmA$ and let $f$ be a
morphism in $\Re^n(M,N)$, with $n\geq 2$. Then, the following conditions hold.
\benu
          \item[(i)]  There exist a natural number  $s$, indecomposables $A$-modules
                   $X_1,\dots,X_s$, morphisms $f_i\in \Re(M,X_i)$ and morphisms
                   $g_i:X_i\fle N$, with each $g_i$  a sum of compositions of $n-1$ irreducible
                    morphisms between indecomposable modules such that $f=\sum_{i=1}^sg_if_i$.
          \item[(ii)]  If  $f\in \Re^n(M,N)\setminus \Re^{n+1}(M,N)$, then at leats one of the $f_i$
          in $(i)$ is irreducible.
          \enu
\end{prop}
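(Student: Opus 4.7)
The plan is to prove (i) by induction on $n$, using the sink map at the indecomposable module $N$ as the engine, and then to deduce (ii) by contraposition. Since $N$ is indecomposable, it admits a minimal right almost split morphism $h:\bigoplus_{k}Y_{k}\to N$ with each $Y_{k}$ indecomposable and each component $h_{k}:Y_{k}\to N$ irreducible: this is the right-hand map of the Auslander--Reiten sequence ending at $N$ when $N$ is non-projective, and the inclusion $\mathrm{rad}\,N\hookrightarrow N$ when $N$ is projective. Its defining property is that any morphism into $N$ which is not a split epimorphism factors through $h$. In particular, for every indecomposable $X$, every $\alpha\in\Re(X,N)$ can be written as $\sum_{k}h_{k}\beta_{k}$ for suitable $\beta_{k}:X\to Y_{k}$.

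For the base case $n=2$, I would start from an expression $f=\sum_{j}v_{j}u_{j}$ with $u_{j}\in\Re(M,Z_{j})$ and $v_{j}\in\Re(Z_{j},N)$. Decomposing each $Z_{j}$ into indecomposable summands via inclusions $\iota_{l}$ and projections $\pi_{l}$, I rewrite $f=\sum_{j,l}(v_{j}\iota_{l})(\pi_{l}u_{j})$, where $v_{j}\iota_{l}\in\Re(Z_{j,l},N)$ and $\pi_{l}u_{j}\in\Re(M,Z_{j,l})$ because $\Re$ is an ideal. Factoring each $v_{j}\iota_{l}$ through the sink map as $\sum_{k}h_{k}w_{j,l,k}$ and collecting yields $f=\sum_{k}h_{k}\big(\sum_{j,l}w_{j,l,k}\pi_{l}u_{j}\big)$; this is the desired decomposition with $X_{k}=Y_{k}$, $g_{k}=h_{k}$ (a sum of compositions of $n-1=1$ irreducibles) and $f_{k}=\sum_{j,l}w_{j,l,k}\pi_{l}u_{j}\in\Re(M,Y_{k})$.

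For the inductive step $n\geq 3$, I would write $f\in\Re^{n}(M,N)$ as $f=\sum_{j}v_{j}u_{j}$ with $v_{j}\in\Re(Z_{j},N)$ and $u_{j}\in\Re^{n-1}(M,Z_{j})$, decompose each $Z_{j}$ into indecomposables, and again factor every radical piece $v_{j}\iota_{l}:Z_{j,l}\to N$ through the sink map $h$. This produces $f=\sum_{k}h_{k}\tilde{f}_{k}$ for some $\tilde{f}_{k}\in\Re^{n-1}(M,Y_{k})$. Applying the inductive hypothesis to each $\tilde f_{k}$ gives $\tilde f_{k}=\sum_{i}g_{k,i}f_{k,i}$ with $f_{k,i}\in\Re(M,X_{k,i})$, $X_{k,i}$ indecomposable, and $g_{k,i}$ a sum of compositions of $n-2$ irreducibles between indecomposables; then $f=\sum_{k,i}(h_{k}g_{k,i})f_{k,i}$ and each $h_{k}g_{k,i}$ is a sum of compositions of $n-1$ irreducibles, exactly as required.

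Finally, part (ii) follows by contraposition. If every $f_{i}$ lay in $\Re^{2}(M,X_{i})$, then because each $g_{i}$ is a sum of compositions of $n-1$ irreducibles we have $g_{i}\in\Re^{n-1}$, so every $g_{i}f_{i}\in\Re^{(n-1)+2}=\Re^{n+1}$; summing, $f\in\Re^{n+1}(M,N)$, contradicting the hypothesis. Therefore some $f_{i}\in\Re\setminus\Re^{2}$, which by Bautista's criterion from \cite{B} means that $f_{i}$ is irreducible. The step I expect to be the main obstacle is verifying that the assembled $\tilde f_{k}$ really lies in $\Re^{n-1}(M,Y_{k})$: each summand $w_{j,l,k}\pi_{l}u_{j}$ must be analysed observing that $\pi_{l}u_{j}\in\Re^{n-1}$ and $w_{j,l,k}$ is either an isomorphism or lies in $\Re$, so the product sits in $\Re^{n-1}$ in either case because $\Re^{n-1}$ is a two-sided ideal.
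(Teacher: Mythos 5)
Your proof is correct. Note that the paper itself gives no proof of this proposition --- it is quoted verbatim from \cite{ARS} --- and your argument (induction on $n$ using the minimal right almost split morphism ending at the indecomposable module $N$ for part (i), then contraposition plus Bautista's criterion $\Re\setminus\Re^2 = \{\text{irreducibles}\}$ for part (ii)) is essentially the standard argument behind the cited result, with the key ideal-theoretic step, namely that $\tilde f_k\in\Re^{n-1}(M,Y_k)$, properly justified.
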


It is well known by a result of M. Auslander that an algebra $A$ is representation-finite
if and only if $\Re^\infty(\mbmA)=0$. That is,
there is a positive integer $n$ such that $\Re^n(X,Y)=0$ for all $X,Y$ $A$-modules.
The minimal positive integer  $m$ such that $\Re^m(\mbmA)=0$ is called the \textit{nilpotency index} of
$\Re(\mbmA).$ We denote such an index by $r_A$.
\end{sinob}
\vspace{.1in}

\begin{sinob}{\bf Basic definitions of paths}
\vspace{.1in}

A path  in $\mbmA$ is a sequence $M_0\stackrel{f_{1}}\fle M_1\stackrel{f_{2}}\fle M_2\fle
\dots \fle M_{n-1}\stackrel{f_{n}}\fle M_n$
of non-zero non-isomorphisms $f_1,\dots, f_n$ between indecomposable $A$-modules with $n\geq 1$.
In case that $f_1,\dots,f_n$ are irreducible morphisms, we say that the path
is in $\Gamma_A$ or equivalently that it is a \textit{path in $\Gamma_A$}.
The length of a path in $\Gamma_A$ is defined as the number of irreducible morphisms
(not necessarily different) involved in the path.

Let us recall that paths in $\Gamma_A$ having the same starting vertex and the same ending vertex
are called \textit{parallel paths}.

Let $\Gamma$ be a component of $\Gamma_A$. We say that $\Gamma$ is a \textit{component with length}
if parallel paths in $\Gamma$ have the same length. Otherwise, it is called a \textit{component without length}, see \cite{CPT}.

By a \textit{directed component} we mean a component $\Gamma$ that there is no sequence $M_0\stackrel{f_{1}}\fle M_1\stackrel{f_{2}}\fle M_2\fle
\dots \fle M_{n-1}\stackrel{f_{n}}\fle M_n$
of non-zero non-isomorphisms $f_1,\dots, f_n$ between indecomposable $A$-modules with $M_0 = M_n$.

Given a directed component $\Gamma$ of $\Gamma_A$, its \textit{orbit graph} $O(\Gamma)$ is a graph  defined as follows: the points of $O(\Gamma)$ are the $\tau$-orbits
$O(M)$ of the indecomposable modules M in $\Gamma$. There is an edge between $O(M)$ and $O(N)$ in $O(\Gamma)$
if there are positive integer $n, m$ and  either an irreducible morphism from $\tau^m M$ to  $\tau^n N$ or from $\tau^n N$ to $\tau^m M$ in $\mbox{mod} A$.

Note that if the orbit graph  $O(\Gamma)$ is of tree-type,
then $\Gamma$ is a simply connected translation quiver, and by \cite{BG} we know that
$\Gamma$ is a component with length.
\end{sinob}
\vspace{.1in}

\begin{sinob} \label{nilpo} {\bf On the nilpotency index of the radical of a module category}
\vspace{.1in}

We say that the \textit{depth of a morphism} $f: M \rightarrow N$
in ${\rm mod}\,A$ is infinite if $f\in \Re^\infty(M, N)$; otherwise, the depth
of $f$ is the integer $n\geq 0$ for which $f\in \Re^n(M, N)$ but $f \notin \Re^{n+1}(M, N)$.
We denote the depth of $f$ by ${\rm dp}(f)$.

Next, we recall the definition of degree of an irreducible morphism given by S. Liu in \cite{L}.

Let $f:X\rightarrow Y$ be an irreducible morphism in
$\mbox{mod}\,A$, with $X$ or $Y$ indecomposable. The {\it left degree} $d_l(f)$ of $f$ is infinite,
if for each integer $n\geq 1 $, each module $Z\in \mbox{ind}\,A$
and each morphism $g:Z  \rightarrow X$ with ${\rm dp}(g)=n$ we
have that $fg \notin \Re^{n+2}(Z,Y)$. Otherwise, the left degree of
$f$ is the least natural $m$ such that there is an $A$-module $Z$
and a morphism $g:Z  \rightarrow X$ with ${\rm dp}(g)=m$ such that
$ fg \in \Re^{m+2}(Z,Y)$.

The {\it right degree} $d_r(f)$ of an irreducible morphism $f$
is dually defined.\vspace{.05in}

In order to compute the nilpotency index of the radical of any module category we shall strongly used \cite[Theorem A]{C}.
For the convenience of the reader,  we state below such a result. 

Let $A=kQ_A/I_A$ be a representation-finite algebra. Let $a\in (Q_A)_0$ and  $P_a$, $I_a$ and $S_a$ be the projective, injective and simple $A$-modules, respectively,  corresponding to the vertex $a$.

For each $a\in (Q_A)_0$,
let $n_a$ be the number defined as follows: \benu
\item[$\bullet$] If $P_a=S_a$, then $n_a=0$.
\item[$\bullet$] If $P_a\not \simeq S_a$, then $n_a=d_r(\iota_a)$, where $\iota_a$
is the irreducible morphism $\iota_a:\mbox{rad}(P_a)\fle P_a$.
\enu

Dually, for each $a\in (Q_A)_0$, let $m_a$  be the number defined  as follows:
\benu
\item[$\bullet$] If $I_a=S_a$, then $m_a=0$.
\item[$\bullet$] If $I_a \not \simeq S_a$, then $m_a=d_r(\theta_a)$, where $\theta_a$
is the irreducible morphism $\theta_a:I_a\fle I_a/\mbox{soc}(I_a)$.
\enu

We write $r_a=m_a+n_a$.

\begin{teo} \cite[Theorem A]{C} Let $A\simeq kQ_A/I_A$ be a finite
dimensional algebra over an algebraically closed field and assume
that $A$ is a representation-finite algebra. Then the nilpotency index $r_A$
of $\Re(\emmA)$ is
$r_A=\emph{max}\{r_a\}_{a\in (Q_A)_{0}}+1.$
\end{teo}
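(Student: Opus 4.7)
My plan is to prove the formula by establishing two matching inequalities: $\Re^{\max_a r_a}(\text{mod}\,A) \neq 0$, giving $r_A \geq \max_a r_a + 1$, and $\Re^{\max_a r_a + 1}(\text{mod}\,A) = 0$, giving the reverse inequality. I would treat the two bounds independently, and in both cases the degree machinery of Liu, together with the finiteness result of \cite{CLT}, is the main engine.

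For the lower bound, I would fix a vertex $a$ attaining $\max_b r_b$. The description of $r_a$ recalled before the theorem produces an explicit path $P_a \to X_1 \to \cdots \to X_{k-1} \to S_a \to Y_1 \to \cdots \to Y_{l-1} \to I_a$ of irreducible morphisms between indecomposables, with total length $r_a = k + l$. Let $\phi$ be the composition of these $r_a$ irreducibles, so $\phi \in \Re^{r_a}(P_a, I_a)$. To check $\phi \neq 0$ I would split it at $S_a$ as $\phi = \phi_2 \circ \phi_1$ with $\phi_1 \colon P_a \to S_a$ and $\phi_2 \colon S_a \to I_a$. Since $\text{Hom}_A(P_a, S_a)$ is one-dimensional (spanned by the projective cover) and $\text{Hom}_A(S_a, I_a)$ is one-dimensional (spanned by the injective envelope), induction on the length of each subpath---using that any shorter partial composition of irreducibles between indecomposables is nonzero, for otherwise $r_a$ would be strictly smaller---shows that $\phi_1$ and $\phi_2$ are nonzero scalar multiples of these canonical maps. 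Hence $\phi$ is a nonzero scalar multiple of the canonical $P_a \twoheadrightarrow S_a \hookrightarrow I_a$, giving $r_A \geq r_a + 1$.

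For the upper bound, take $f \in \Re^{r+1}(M, N)$ between indecomposables with $r = \max_a r_a$, and aim to show $f = 0$. Iterating Proposition \ref{aus} writes $f$ as a sum of compositions of $r+1$ irreducible morphisms between indecomposables, so it suffices to kill each such composition $g = h_{r+1} \circ \cdots \circ h_1$. Here Liu's degree theory enters decisively: by \cite{CLT}, for each vertex $a$ the monomorphism $\iota_a$ has finite right degree $n_a$ and the epimorphism $\theta_a$ has finite degree $m_a$. By the very definition of degree, any composition ending at $\text{rad}(P_a)$ of depth exceeding $n_a$, when followed by $\iota_a$, is pushed into a strictly higher power of the radical; dually for $\theta_a$. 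I would then argue that a path of $r+1$ irreducibles between indecomposables whose composition is nonzero must at some stage pass through a projective via $\iota_a$ and later through an injective via $\theta_a$, and the degree bounds force the overall depth of $g$ to exceed $n_a + m_a + 1 = r_a + 1$; iterating this analysis drives $g$ into $\Re^{\infty}(\text{mod}\,A) = 0$.

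The main obstacle I expect is precisely this last step: turning the local, pointwise degree bounds at each $\iota_a$ and $\theta_a$ into a uniform global bound of $\max_a r_a + 1$. Concretely, one must prove that the maximal length of a nonzero composition of irreducibles between indecomposables corresponds to a path of the form $P_a \rightsquigarrow S_a \rightsquigarrow I_a$, after possibly precomposing with $\iota_a$ and post-composing with $\theta_a$, and that the contributions from $n_a$ and $m_a$ sum to exactly $r_a$. Verifying this combinatorial correspondence---and in particular that the definition $r_a = n_a + m_a$ agrees with the alternate description of $r_a$ as the length of the path through $S_a$---is the technical heart of the argument and the step that would require the most care.
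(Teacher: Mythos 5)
This statement is quoted by the paper from \cite[Theorem A]{C}; the paper itself gives no proof, so your attempt can only be measured against the argument that the paper's stated ingredients (notably Lemma \ref{clau2.5}) make available. Your lower bound has the right overall shape (exhibit a nonzero morphism of depth $r_a$), but as written it is not a proof: you invoke the description of $r_a$ as the length of a path of irreducible morphisms through $S_a$, which is \cite[Remark 1]{C}, itself a by-product of the theorem's machinery, so using it here is circular; and your argument that the composite of the irreducibles along such a path is nonzero (``otherwise $r_a$ would be strictly smaller'') is an assertion, not an argument --- one-dimensionality of $\mbox{Hom}_A(P_a,S_a)$ and $\mbox{Hom}_A(S_a,I_a)$ only says each partial composite is a scalar multiple of the canonical map, possibly the zero multiple. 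The efficient route forgets paths entirely: the composite $P_a \twoheadrightarrow S_a \hookrightarrow I_a$ of projective cover and injective envelope is visibly nonzero and factors through $S_a$, so Lemma \ref{clau2.5}(a) gives it depth exactly $r_a$; hence $\Re^{r_a}(\mbox{mod}\,A)\neq 0$ and $r_A \geq \mbox{max}_a r_a + 1$.

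The genuine gap is the upper bound, precisely the step you flag as ``the technical heart'' but do not carry out. Your central claim --- that a nonzero composition of $r+1$ irreducible morphisms must ``pass through a projective via $\iota_a$ and later through an injective via $\theta_a$'' --- is false: already for $kA_3$ with linear orientation, the nonzero composite $P_2 \to S_2 \to I_2$ of two irreducible morphisms (which realizes the maximal length $\mbox{max}_a r_a = 2$) contains no arrow of the form $\mbox{rad}(P_a)\to P_a$ or $I_a \to I_a/\mbox{soc}(I_a)$, since $S_2$, $P_2$, $I_2$ are respectively non-projective, non-injective, non-projective. The correct mechanism is external to the path, not internal: given any nonzero $f:M\to N$, choose a simple $S_a\subseteq \mbox{soc}(\mbox{im}\,f)$ and $0\neq x\in e_a(\mbox{im}\,f)$; the morphism $P_a\to \mbox{im}(f)$ determined by $x$ has image exactly $Ax=S_a$, lifts through the epimorphism $M\twoheadrightarrow \mbox{im}(f)$ by projectivity, and $S_a\hookrightarrow I_a$ extends to $v:N\to I_a$ by injectivity. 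This yields $u:P_a\to M$, $v:N\to I_a$ with $vfu\neq 0$ factoring through $S_a$, with the \emph{same} vertex $a$ at both ends, so Lemma \ref{clau2.5}(a) gives $\mbox{dp}(vfu)=r_a$; since the powers of the radical are ideals, $\mbox{dp}(f)\leq \mbox{dp}(vfu)=r_a\leq \mbox{max}_a r_a$, whence $\Re^{\mbox{\scriptsize max}_a r_a+1}(\mbox{mod}\,A)=0$. Your alternative plan --- pushing a nonzero morphism to ever higher depth ``into $\Re^\infty$'' --- supplies no iteration mechanism, and it also misquotes Proposition \ref{aus}: that result writes $f$ as sums of (compositions of $n-1$ irreducibles)$\circ$(radical morphisms), so even the preliminary reduction to killing compositions of $r+1$ irreducibles requires an additional factorization through source maps, which you do not supply (though that part is standard and fixable).
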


Following \cite[Remark 1]{C}, $r_a$ is equal to the length of any path of irreducible
morphisms between indecomposable modules from the projective $P_a$ to the injective
$I_a$, going through the simple $S_a$.
\vspace{.05in}

Finally, we recall a result of \cite{C} that shall be useful in this work.

\begin{lem}\label{clau2.5}\emph{\cite[Lemma 2.4]{C}}
Let $A\cong kQ/I$
 be a representation-finite algebra.
 Given $a\in Q_{0}$, consider $r_a$ the number defined as above. Then, the following conditions hold.
\begin{enumerate}
\item[(a)]
Every non-zero morphism $f:P_{a}\rightarrow I_{a}$ that factors through the simple A-module $S_a$ is
such that ${\rm dp}(f)=r_a$.
\item[(b)] Every non-zero morphism $f:P_{a}\rightarrow I_{a}$ which does not factor through the simple A-
module $S_a$ is such that ${\rm dp}(f)=k$, with $0\leq k<r_a$.
\end{enumerate}
\end{lem}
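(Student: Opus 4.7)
The strategy is to identify the depth of the canonical projection $p:P_a\to S_a$ and the canonical inclusion $j:S_a\hookrightarrow I_a$, and then combine these in any factorization through $S_a$. I would first dispose of the boundary cases $P_a\simeq S_a$ (so $n_a=0$) and $I_a\simeq S_a$ (so $m_a=0$), in which the claim reduces directly. So henceforth assume $P_a\not\simeq S_a$ and $I_a\not\simeq S_a$.

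The central technical step is to prove that $\mathrm{dp}(p)=n_a$ and, dually, $\mathrm{dp}(j)=m_a$. For $\mathrm{dp}(p)\geq n_a$ I would use the very definition $n_a=d_r(\iota_a)$: since $p\circ \iota_a=0$ lies in every power of the radical, if $\mathrm{dp}(p)$ were strictly smaller than $n_a$ this would contradict the minimality in the definition of $d_r(\iota_a)$. For the reverse inequality, I would invoke \cite[Remark 1]{C}, which exhibits an explicit path of length $n_a$ of irreducible morphisms between indecomposables from $P_a$ to $S_a$; the composition along this path is a non-zero scalar multiple of $p$ (since $\dim_k\mathrm{Hom}(P_a,S_a)=1$), and consequently $p\in\Re^{n_a}$. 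The dual argument applied to $\theta_a$ gives $\mathrm{dp}(j)=m_a$.

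For part (a), write $f=gh$ with $h:P_a\to S_a$ and $g:S_a\to I_a$. Simplicity of $S_a$ forces $h=\lambda p$ and $g=\mu j$ for non-zero scalars $\lambda,\mu$, so $f=\lambda\mu\,jp$ and thus $f\in\Re^{n_a+m_a}=\Re^{r_a}$. To rule out $f\in\Re^{r_a+1}$, I would argue by contradiction: by Proposition \ref{aus} applied iteratively, such an $f$ would decompose as a sum of compositions of $r_a+1$ irreducibles between indecomposables; combined with Theorem A of \cite{C} and the path-length description of $r_a$ in \cite[Remark 1]{C}, every such composition from $P_a$ to $I_a$ must already vanish, a contradiction. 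For part (b), if $f$ does not factor through $S_a$, write $f$ as a sum of compositions of irreducibles via Proposition \ref{aus}. Each non-zero summand corresponds to a path of irreducibles between indecomposables from $P_a$ to $I_a$ that avoids $S_a$; by the path description of $r_a$, any such path has length strictly less than $r_a$ (otherwise one could rearrange it to pass through $S_a$, forcing $f$ to factor through $S_a$, contrary to assumption). Hence $\mathrm{dp}(f)<r_a$.

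The main obstacle is establishing $\mathrm{dp}(p)=n_a$ precisely, since it bridges the abstract right-degree definition with the concrete path-length interpretation and relies on representation-finiteness through the vanishing of $\Re^\infty$. The second subtle point is ruling out representations of $f$ as sums of longer compositions of irreducibles in part~(a); this requires the global control on the nilpotency index supplied by Theorem A of \cite{C}, together with the sectional structure of the AR-quiver around the path $P_a\to\cdots\to S_a\to\cdots\to I_a$.
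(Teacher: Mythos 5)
First, a point of comparison: the paper contains no proof of this statement at all — it is quoted verbatim from \cite[Lemma 2.4]{C} — so your argument can only be judged on its own terms, and on those terms it has genuine gaps.

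The central gap is in your key step $\mathrm{dp}(p)=n_a$. Your argument for $\mathrm{dp}(p)\geq n_a$ (from $p\circ\iota_a=0$, finiteness of $\mathrm{dp}(p)$, and minimality in the definition of $d_r(\iota_a)$) is correct. But your argument for the ``reverse inequality'' proves the same inequality again: exhibiting a path of $n_a$ irreducible morphisms whose composite is a non-zero scalar multiple of $p$ shows $p\in\Re^{n_a}(P_a,S_a)$, which is a \emph{lower} bound on depth; membership in a power of the radical can never bound the depth from above. What is actually needed is $p\notin\Re^{n_a+1}(P_a,S_a)$, and no factorization of $p$ can establish that. This upper bound is precisely the non-trivial content of the dual of \cite[Proposition 3.4]{CLT} — the very result this paper invokes for this exact purpose in the proof of Lemma \ref{rx=r'y} — and it is a theorem of degree theory, not a consequence of counting path lengths.

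The same reversal of directions, compounded by circularity, affects parts (a) and (b). In (a), after $f=\lambda\mu\, jp\in\Re^{r_a}(P_a,I_a)$, you must show $f\notin\Re^{r_a+1}(P_a,I_a)$; your justification — that every composite of $r_a+1$ irreducible morphisms from $P_a$ to $I_a$ vanishes — is equivalent to the assertion $\Re^{r_a+1}(P_a,I_a)=0$, which is essentially what parts (a) and (b) together say, so assuming it is circular. It also does not follow from the tools you cite: \cite[Theorem A]{C} gives only $\Re^{\max_b r_b+1}(\mathrm{mod}\,A)=0$, and $\max_b r_b$ may strictly exceed $r_a$; Remark 1 of \cite{C} concerns only paths passing through $S_a$ (and asserts equality of lengths, not the existence of a path of length $n_a$ with non-zero composite). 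Worse, in \cite{C} both Theorem A and Remark 1 are consequences of Lemma 2.4, so invoking them to prove it is circular on a second level. In (b), the claim that each non-zero summand of the decomposition from Proposition \ref{aus} corresponds to a path avoiding $S_a$ is unjustified (a sum of morphisms can fail to factor through $S_a$ even when individual summands do factor through it), and the parenthetical ``one could rearrange the path to pass through $S_a$'' appeals to a rearrangement principle that does not exist for paths of irreducible morphisms. A correct proof has to run through the degree machinery of \cite{L} and \cite{CLT}: finiteness of $d_r(\iota_a)$ and $d_l(\theta_a)$, the identification of these degrees with $\mathrm{dp}(p)$ and $\mathrm{dp}(j)$, and a genuine argument — not a path count — that $\mathrm{dp}(jp)=n_a+m_a$ and that deep morphisms $P_a\fle I_a$ must factor through $S_a$.
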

\end{sinob}
\vspace{.1in}

\begin{sinob} {\bf The cluster category}
\vspace{.1in}

Let $H$ be a hereditary algebra.  We denote by $\D=\mathcal{D}^{b} (\mbox{mod}\,H)$ the
bounded  derived category of $\mbmH$. The \textit{cluster category}, $\C$,  is defined as the quotient  $\mathcal{D}/F $,
where $F$ is the composition $\tau_{\D}^{-1}[1]$ of the suspension functor and the Auslander-Reiten
translation in $\D$. The objects of $\C$ are the $F$-orbits of the objects in $\D$, and the morphisms
in $\C$ are defined as

\begin{equation}\label{morf}
\mbox{Hom}_{\C}(\til{X},\til{Y})=\coprod_{i\in \mathbb{Z}}\mbox{Hom}_{\D}(F^i X, Y),
\end{equation}

\noindent where $X, Y$ are objects in $\D$ and  $\til{X}, \til{Y}$ are the corresponding objects in $\C$.
By \cite[Proposition 1.5]{BMRRT}, the summands of $(\ref{morf})$ are almost all zero.

We recall some basic and useful properties of $\C$.

\benu
\item[(i)] $\C$ is a Krull-Schmidt category.
\item[(ii)] $\C$ is a triangulated category, whose suspension functor over
$\C$ is denoted by $[1].$
\item[(iii)] $\C$ has  Auslander-Reiten triangles, which are  induced by
the Auslander-Reiten triangles of $\D$. We also denoted the
Auslander-Reiten translation of $\C$ by $\tau.$
\enu

\begin{obs}\label{caminoC}
\emph{We deduce by $(iii)$ that the irreducible
morphisms in $\C$ are induced by the irreducible morphisms in
$\D$. Moreover, the non-zero paths of irreducible morphisms between
indecomposable objects in $\C$ are induced by non-zero paths
of irreducible morphisms between
indecomposable objects in $\D$, and both paths have the same length.}
\end{obs}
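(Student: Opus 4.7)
Both assertions are expected to follow quickly from property $(iii)$ combined with the morphism formula $(\ref{morf})$, using the structure of the canonical quotient functor $\pi\colon \D\fle \C$.

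For the first sentence, the plan is to invoke the triangulated analogue of Bautista's criterion: in a Krull--Schmidt triangulated category with Auslander--Reiten triangles, an irreducible morphism between indecomposable objects is precisely a direct summand (up to composition with an isomorphism) of the middle morphism of the Auslander--Reiten triangle ending at the target. Property $(iii)$ ensures that every Auslander--Reiten triangle in $\C$ is the image under $\pi$ of an Auslander--Reiten triangle in $\D$, so every such summand in $\C$ is the image of an irreducible morphism between indecomposables in $\D$.

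For the path assertion, I would proceed morphism by morphism. Given a non-zero path $\til X_0\fle \til X_1\fle \dots \fle \til X_n$ of irreducible morphisms between indecomposable objects in $\C$, use the first part to write each $\til h_i\colon \til X_{i-1}\fle \til X_i$ as $\pi(h_i)$ for an irreducible $h_i$ in $\D$ between suitable $F$-representatives. By $(\ref{morf})$ together with \cite[Proposition 1.5]{BMRRT}, almost all summands of the Hom-formula vanish, so one can choose representatives $Y_j$ of each orbit $\til X_j$ coherently to obtain a composable path $Y_0\fle Y_1\fle \dots \fle Y_n$ in $\D$. Then $\pi(h_n\dots h_1)=\til h_n\dots \til h_1\neq 0$, so the lifted composition is itself non-zero and of length $n$. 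Conversely, since $\pi$ preserves Auslander--Reiten triangles and hence irreducibility, any non-zero path of length $n$ in $\D$ descends to one of length $n$ in $\C$.

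The main obstacle, and the only non-formal step, is ensuring that a coherent simultaneous choice of $F$-orbit representatives $Y_j$ can be made so that successive lifted morphisms $h_i$ live in composable degrees in $\D$ rather than in incompatibly shifted copies. This is controlled by the near-vanishing of the summands in $(\ref{morf})$, which allows the ambiguities in the shifts to be resolved consistently along any finite path while preserving both irreducibility and non-vanishing at each step.
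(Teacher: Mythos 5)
Your first assertion is handled correctly: combining property $(iii)$ with the standard Auslander--Reiten-triangle characterization of irreducible morphisms between indecomposables is exactly the deduction the paper intends (the paper states the remark without any argument beyond ``we deduce by $(iii)$''), and that part of your plan is sound, as is your converse direction: each $\mbox{Hom}_{\D}(F^iX,Y)$ embeds as a direct summand of $\mbox{Hom}_{\C}(\til{X},\til{Y})$, so the projection functor kills no non-zero composite and non-zero paths in $\D$ descend to non-zero paths in $\C$ of the same length.

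The gap is in the step you yourself single out as ``the only non-formal step,'' and the tool you cite does not close it. The near-vanishing of the summands in $(\ref{morf})$ (\cite[Proposition 1.5]{BMRRT}) says only that all but finitely many summands are zero; it does not prevent a morphism $\til{h}_i$ between indecomposable objects of $\C$ from having two or more non-zero components in different degrees, and in that case there is no single morphism $h_i$ in $\D$ with $\pi(h_i)=\til{h}_i$, so your key identity $\pi(h_n\dots h_1)=\til{h}_n\dots\til{h}_1$ --- the whole basis for concluding that the lifted composite is non-zero --- is unjustified. Two different facts are what actually close this step. First, in the representation-finite case (the case the paper needs, and the reason it records, immediately after the remark, that for $X,Y$ in the fundamental domain at most one of $\mbox{Hom}_{\D}(F^{-1}X,Y)$ and $\mbox{Hom}_{\D}(X,Y)$ is non-zero), every morphism between indecomposables of $\C$ lives in a single degree, so each $\til{h}_i$ is the image of a unique morphism of $\D$; mere ``almost all vanish'' is strictly weaker than this dichotomy. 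Second, the shift ambiguity along the path is resolved not by any vanishing statement but by the fact that $F=\tau^{-1}_{\D}[1]$ is a triangle auto-equivalence of $\D$ commuting with the Auslander--Reiten structure: having lifted $\til{h}_i$ to an irreducible morphism $F^{j}Y_{i-1}\fle Y_i'$, you apply $F^{-j}$, which preserves irreducibility and non-vanishing, set $Y_i=F^{-j}Y_i'$, and induct on $i$ to obtain a composable lifted path. With these two replacements your argument goes through; as written, the crucial step rests on a property that is too weak to deliver it.
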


We denote by $\mathcal{S}$ the set $\mbox{ind}(\mbox{mod}\,H\vee H[1])$
consisting of the indecomposable $H-$modules together with the objects $P[1]$,
where $P$ is an indecomposable projective $H$-module. We can see the set $\mathcal{S}$
as the fundamental domain of $\C$ for the action of $F$ in $\D$, containing
exactly one representative object from each $F$-orbit in $\mbox{ind}\,\D$.

It is known that given $X$ and $Y$ objects in $\mathcal{S}$, then
$\mbox{Hom}_{\D}(F^iX,Y)=0$ for all $i\neq -1,0.$ Moreover, if $H$ is a
representation-finite algebra, then at least one,
$\mbox{Hom}_{\D}(F^{-1}{X},{Y})$ or  $\mbox{Hom}_{\D}({X},{Y})$ vanishes.
\end{sinob}
\vspace{.1in}

\begin{sinob}{\bf On tilting objects}
\vspace{.1in}

An object $T$ in $\C$ is said to be a \textit{tilting object} if $\mbox{Ext}^1_{\C}(T,T)=0$
and $T$ is maximal with that property, that is, if $\mbox{Ext}^1_{\C}(T\oplus X,T\oplus X)=0$
then $X\in \mbox{add}\,T$.

We say that an object $\overline{T}$ in $\C$ is an \textit{almost complete tilting object}
if $\mbox{Ext}_{\C}^1(\overline{T},\overline{T})=0$ and there is an indecomposable
object $X$, which is called \textit{complement} of $\overline{T}$, such that $\overline{T}\oplus X$ is a tilting object.
It is known that an almost complete tilting object $\overline{T}$ in $\C$
has exactly two non-isomorphic complements. We denote them by $M$ and $M^*$.

The algebra $\mbox{End}_{\C}(T)^{op}$, where $T$ is a tilting object in $\C$, is called
a \textit{cluster tilted algebra} of type $\overline{Q}$, where $Q$ is the quiver whose path algebra
is the hereditary algebra $H$, that is, $H=kQ$.

We denote by $\Gamma$ the cluster tilted algebra $\mbox{End}_{\C}(T)^{op}$,
and by $\Gamma'$ the cluster tilted algebra $ \mbox{End}_{\C}(T')^{op}$, with
 $T=\overline{T}\oplus M$ and $T'=\overline{T}\oplus M^*$ where $\overline{T}$ is an almost complete tilting object
in $\C$ with complements $M$ and $M^*$, respectively. In \cite[Theorem 1.3]{BMR1}, the authors proved
that we can pass from one algebra to the other by using mutation.
\vspace{.1in}

The next theorem shall be fundamental  to develop some results of this paper.

\begin{teo}\emph{\cite{BMR}}\label{equivCGamma}
Let $T$ be a tilting object in $\C$ and we denote by $G$ the functor $\emph{Hom}_{\C}(T,-):\C\fle \emmG$.
Then, the functor $\overline{G}:\C/\emph{add}(\tau T)\fle \emmG$ (induced by $G$)
is an equivalence.
\end{teo}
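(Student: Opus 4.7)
The plan is to prove that $\bar G$ is well defined, dense, full, and faithful. The whole argument rests on two fundamental facts: first, that $\Gamma$-mod has enough projectives supplied precisely by $G(\mathrm{add}(T))$, so that the restriction $G|_{\mathrm{add}(T)}\colon \mathrm{add}(T)\to\mathrm{proj}\,\Gamma$ is already an equivalence (this is just the Yoneda-type statement that $\mathrm{Hom}_{\C}(T,T_i)\simeq \Gamma e_i$ where $T=\bigoplus T_i$); and second, the key vanishing $\mathrm{Hom}_{\C}(T,\tau T)=0$. This vanishing follows from the Ext-vanishing condition $\mathrm{Ext}^{1}_{\C}(T,T)=0$ in the definition of a tilting object, combined with the $2$-Calabi-Yau property of $\C$, which gives $\mathrm{Hom}_{\C}(T,\tau T)\cong D\,\mathrm{Ext}^{1}_{\C}(T,T)=0$.

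Well-definedness of $\bar G$ is then immediate: any morphism in $\C$ that factors through $\mathrm{add}(\tau T)$ is sent by $G$ to a morphism that factors through $G(\tau T)=0$, hence is zero in $\mathrm{mod}\,\Gamma$. For density, I would start with an arbitrary $M\in\mathrm{mod}\,\Gamma$ and pick a minimal projective presentation $P_{1}\to P_{0}\to M\to 0$. Using the equivalence $G|_{\mathrm{add}(T)}\simeq \mathrm{proj}\,\Gamma$, lift $P_{1}\to P_{0}$ to a morphism $T_{1}\to T_{0}$ in $\C$ with $T_{0},T_{1}\in\mathrm{add}(T)$, and complete it to a triangle $T_{1}\to T_{0}\to X\to T_{1}[1]$ in $\C$. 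Applying the cohomological functor $\mathrm{Hom}_{\C}(T,-)$ and using $\mathrm{Ext}^{1}_{\C}(T,T_{1})=0$, the associated long exact sequence collapses to $G(T_{1})\to G(T_{0})\to G(X)\to 0$, identifying $G(X)\cong M$.

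Fullness and faithfulness I would handle together by the same lifting technique. Given $X,Y\in\C$, build $\mathrm{add}(T)$-triangles approximating $X$ and $Y$ (using tilting theory to guarantee existence), and apply $G$ to obtain projective presentations of $G(X)$ and $G(Y)$ in $\mathrm{mod}\,\Gamma$. A morphism $f\colon G(X)\to G(Y)$ in $\mathrm{mod}\,\Gamma$ lifts along these presentations to a chain map between triangles, which in turn yields a morphism $\tilde f\colon X\to Y$ in $\C$ with $G(\tilde f)=f$, giving fullness. For faithfulness, suppose $\phi\colon X\to Y$ in $\C$ satisfies $G(\phi)=0$. Using the same triangle approximations, one diagram-chases to produce a factorisation of $\phi$ through the cone of a morphism of the form $T'\to \tau T''$, and the octahedral axiom together with $\mathrm{Hom}_{\C}(T,\tau T)=0$ forces $\phi$ to factor through an object of $\mathrm{add}(\tau T)$, so $\phi=0$ in the quotient.

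The technically delicate step is producing the $\mathrm{add}(T)$-approximation triangles and controlling their terms so that the lifting arguments go through cleanly; this is where one really uses that $T$ is tilting rather than merely Ext-rigid. Once this approximation is set up, the three equivalence axioms reduce to standard homological bookkeeping driven by $\mathrm{Ext}^{1}_{\C}(T,T)=0$ and $\mathrm{Hom}_{\C}(T,\tau T)=0$.
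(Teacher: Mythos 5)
The paper does not prove this statement at all: it is quoted as background from \cite{BMR}, so your attempt can only be measured against the proof in that source. Your outline is correct, but it follows a genuinely different route. In \cite{BMR} the theorem is proved by first reducing to the case where $T$ is a tilting \emph{module} over a hereditary algebra derived equivalent to $H$ (possible by \cite{BMRRT}), and then analysing the functor $\mathrm{Hom}_{\C}(T,-)$ on the fundamental domain $\mathrm{ind}(\mathrm{mod}\,H\vee H[1])$ by means of the classical Brenner--Butler tilting theorem; the argument stays close to the module categories of $H$ and of the tilted algebra $\mathrm{End}_{H}(T)^{op}$. What you sketch is instead the abstract argument later given by Keller and Reiten in the general $2$-Calabi--Yau setting: projectivization identifies $\mathrm{add}(T)$ with $\mathrm{proj}\,\Gamma$, the vanishing $\mathrm{Hom}_{\C}(T,\tau T)\cong D\,\mathrm{Ext}^{1}_{\C}(T,T)=0$ makes $\overline{G}$ well defined (in $\C$ one even has $\tau\cong[1]$, so this vanishing is immediate from rigidity), and $\mathrm{add}(T)$-approximation triangles drive density, fullness and faithfulness. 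Your route buys generality --- it works verbatim in any Hom-finite $2$-Calabi--Yau triangulated category with a cluster-tilting object and uses nothing about hereditary algebras or representation-finiteness --- while the route of \cite{BMR} keeps explicit control of how $\mathrm{mod}\,\Gamma$ is assembled from torsion-theoretic data over $H$. The price of your route is the existence, for every object $X$ of $\C$, of a triangle $T_{1}\to T_{0}\to X\to T_{1}[1]$ with $T_{0},T_{1}\in\mathrm{add}(T)$, which you correctly single out as the delicate input: this is a theorem of \cite{BMRRT}, and it genuinely uses maximality of the rigid object $T$, not just $\mathrm{Ext}^{1}_{\C}(T,T)=0$; note that density alone does not need it, since there the triangle is built from a lifted projective presentation.

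Two steps in your sketch should be tightened. For faithfulness no octahedral axiom is needed: if $\phi\colon X\to Y$ satisfies $G(\phi)=0$, take the approximation triangle $T_{1}\to T_{0}\xrightarrow{b}X\xrightarrow{c}T_{1}[1]$; the Yoneda map $\mathrm{Hom}_{\C}(T_{0},Y)\to\mathrm{Hom}_{\Gamma}(G(T_{0}),G(Y))$ is bijective for $T_{0}\in\mathrm{add}(T)$ and an \emph{arbitrary} target $Y$ (a slightly stronger statement than the equivalence $\mathrm{add}(T)\simeq\mathrm{proj}\,\Gamma$ that you quote, and the one you actually need), so $G(\phi b)=0$ forces $\phi b=0$, whence $\phi$ factors through $c$, hence through $\tau T_{1}\in\mathrm{add}(\tau T)$. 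Similarly, in the fullness argument, after lifting a morphism of projective presentations to a morphism of triangles $\til{f}\colon X\to Y$, you should conclude $G(\til{f})=f$ by observing that both morphisms agree after composition with $G(T_{0})\to G(X)$, which is an epimorphism because $\mathrm{Ext}^{1}_{\C}(T,T_{1})=0$.
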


It follows from the above theorem that an indecomposable projective $\Gamma-$module $P_u$
is of the form $\mbox{Hom}_{\C}(T,T_u)$, where $T_u$ is an indecomposable summand of $T$.
Moreover, it is known that the indecomposable injective $\Gamma-$module
$I_u$, which is the injective cover of the simple $S_u=\mbox{top}\,P_u$, is of the
form $\mbox{Hom}_{\C}(T,\tau^2T_u).$

Furthermore, the Auslander-Reiten sequences in $\mbmG\simeq \C/{\mbox{add}(\tau T)}$
are induced by the Auslander-Reiten triangles in $\C$. We can deduce that the irreducible
morphisms in $\mbmG$ which do not factor through $\mbox{add}(\tau T)$ are induced
by irreducible morphisms in $\C.$

Consequently, a path of irreducible morphisms between indecomposable
modules in $\mbmG$ is induced by a path of irreducible morphism between indecomposable objects in $\C$,
and both have the same length.
\vspace{.1in}

Finally, we recall the following important results  useful for our further considerations.

\begin{prop}
Let $\overline{T}$ be a cluster tilted object in $\C$ with complements $M$ and $M^*$.
We consider $\Gamma=\emeCT$ and $\Gamma'=\emph{End}_{\C}(T')^{op}$ with $T=\overline{T}\oplus M$
and $T'=\overline{T}\oplus M^*$.
Then,
\benu
\item[(a)] The $\Gamma$-module $\emph{Hom}_{\C}(T,\tau M^*)$ is simple.
Moreover,  $\emph{Hom}_{\C}(T,\tau M^*)\simeq \emph{top}\,P_x$, where $P_x=\emph{Hom}_{\C}(T,M)$.
\item[(b)] The $\Gamma'$-module $\emph{Hom}_{\C}(T',\tau M)$ is simple.
Moreover, $\emph{Hom}_{\C}(T',\tau M)\simeq \emph{top}\,P'_y$,
where $P'_y=\emph{Hom}_{\C}(T',M^*)$.
\enu
\end{prop}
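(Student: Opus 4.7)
The plan is to exploit the exchange triangle associated with the mutation $M\leftrightarrow M^{\ast}$ in the cluster category $\C$, together with the equivalence $\overline{G}:\C/\mbox{add}(\tau T)\fle \mbmG$ from Theorem~\ref{equivCGamma}. Since $\overline{T}$ is almost complete with complements $M$ and $M^{\ast}$, mutation theory in $\C$ supplies a distinguished triangle
$$M^{\ast}\fle B\stackrel{v}{\fle} M\fle M^{\ast}[1],$$
in which $B\in\mbox{add}(\overline{T})$ and $v$ is a minimal right $\mbox{add}(\overline{T})$-approximation of $M$. Applying $G=\mbox{Hom}_{\C}(T,-)$ produces a long exact sequence; because $\mbox{Ext}^{1}_{\C}(T,B)=0$ (as $B\in\mbox{add}(T)$ and $T$ is tilting), this terminates in
$$\mbox{Hom}_{\C}(T,B)\stackrel{G(v)}{\fle}\mbox{Hom}_{\C}(T,M)\fle \mbox{Hom}_{\C}(T,M^{\ast}[1])\fle 0.$$
In $\C$ the Auslander--Reiten translation satisfies $\tau\simeq [1]$, so $\mbox{Hom}_{\C}(T,M^{\ast}[1])=\mbox{Hom}_{\C}(T,\tau M^{\ast})$, exhibiting this object as a quotient of $P_x=\mbox{Hom}_{\C}(T,M)$.

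Next I identify this quotient with the simple top of $P_x$. Decomposing $B=\bigoplus T_u^{a_u}$ with $T_u$ indecomposable summands of $\overline{T}$, the morphism $G(v)$ is identified, via $\overline{G}$, with a map $\bigoplus P_u^{a_u}\fle P_x$ whose source involves only indices $u\neq x$ (because $M\notin\mbox{add}(\overline{T})$); this gives $\mbox{Im}(G(v))\subseteq \mbox{rad}\,P_x$. Conversely, the right approximation property of $v$ means that any morphism $T_u\fle M$ with $T_u\in\mbox{add}(\overline{T})$ factors through $v$, so via $\overline{G}$ every morphism $P_u\fle P_x$ with $u\neq x$ factors through $G(v)$. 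Since $M$ is a rigid indecomposable summand of $T$, $\mbox{End}_{\C}(M)\cong k$, and hence no additional contribution to $\mbox{rad}\,P_x$ arises from endomorphisms at $u=x$. Thus $\mbox{Im}(G(v))=\mbox{rad}\,P_x$, and therefore
$$\mbox{Hom}_{\C}(T,\tau M^{\ast})\cong P_x/\mbox{rad}\,P_x=\mbox{top}\,P_x=S_x,$$
which is simple. This establishes $(a)$; part $(b)$ is proved verbatim by exchanging the roles of $(T,M)$ and $(T',M^{\ast})$ and starting from the symmetric exchange triangle $M\fle B'\fle M^{\ast}\fle M[1]$ with $B'\in\mbox{add}(\overline{T})$.

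The main obstacle I anticipate is the equality $\mbox{Im}(G(v))=\mbox{rad}\,P_x$. The inclusion $\subseteq$ is immediate, but the reverse inclusion relies on transferring the right $\mbox{add}(\overline{T})$-approximation property of $v$ across the equivalence $\overline{G}$, while verifying that nothing in $\mbox{rad}\,P_x$ is missed from morphisms attached to the vertex $x$ itself. This last point uses $\mbox{End}_{\C}(M)\cong k$, the standard rigidity fact for indecomposable summands of tilting objects in $\C$. Organising the argument through the arrows ending at $x$ in the ordinary quiver of $\Gamma$ seems the cleanest way to make this step precise.
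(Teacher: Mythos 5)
The paper never proves this Proposition: it appears under ``we recall the following important results,'' i.e., it is quoted from the Buan--Marsh--Reiten papers (\cite{BMR}, \cite{BMR1}), so there is no internal proof to compare yours against. Judged on its own, your argument is correct, and in fact it reconstructs essentially the argument of the original sources: apply $G=\mbox{Hom}_{\C}(T,-)$ to the exchange triangle $M^{*}\fle B\stackrel{v}{\fle}M\fle M^{*}[1]$ of \cite{BMRRT}, use that $\mbox{Hom}_{\C}(T,-)$ is cohomological together with $\mbox{Ext}^{1}_{\C}(T,B)=0$ and $\tau\simeq[1]$ in $\C$ to identify $\mbox{Hom}_{\C}(T,\tau M^{*})$ with the cokernel of $G(v)$, and then prove $\mbox{Im}\,G(v)=\mbox{rad}\,P_{x}$. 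Your treatment of the delicate inclusion $\mbox{rad}\,P_{x}\subseteq \mbox{Im}\,G(v)$ is the right one: fullness of $G$ plus the right $\mbox{add}(\overline{T})$-approximation property of $v$ handles every radical morphism $P_{u}\fle P_{x}$ with $u\neq x$, and the contribution at $u=x$ vanishes because $\mbox{End}_{\Gamma}(P_{x})$ is a quotient of $\mbox{End}_{\C}(M)$. The only ingredient you should not wave through as merely ``standard'' is precisely $\mbox{End}_{\C}(M)\cong k$: it does not follow from rigidity alone by a one-word appeal, since $\mbox{End}_{\C}(M)=\mbox{End}_{\D}(M)\oplus\mbox{Hom}_{\D}(F^{-1}M,M)$ and the cross-term equals $\mbox{Ext}^{1}_{\D}(\tau M,M)\simeq D\mbox{Hom}_{\D}(M,\tau^{2}M)$. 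In the representation-finite setting of this paper that term vanishes because $\Gamma(\D)\simeq\mathbb{Z}\Delta$ is directed (a nonzero morphism $M\fle\tau^{2}M$ together with the evident path $\tau^{2}M\rightsquigarrow M$ would give a cycle), while $\mbox{End}_{\D}(M)\cong k$ since $M$ is exceptional; in general this is the known no-loops property of cluster-tilted algebras. With that justification (or citation) inserted, your proof is complete, and part (b) does follow verbatim from the symmetric triangle $M\fle B'\fle M^{*}\fle M[1]$.
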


\begin{teo}\label{equivGG'}
Let $\overline{T}$ be an almost complete tilting object in $\C$ with complements $M$ and $M^*$.
We consider $\Gamma=\emeCT$ and $\Gamma'=\emph{End}_{\C}(T')^{op}$ as above.
Let $S_x$ and $S'_{y}$ be the simples modules $\emph{top}(\emph{Hom}_{\C}(T,M))$
and  $\emph{top}(\emph{Hom}_{\C}(T',M^*))$, respectively. Then, there is an equivalence
$$\theta:\emmG/\emph{add}\,S_x\fle \emmG'/\emph{add}\,S'_{y}.$$
\end{teo}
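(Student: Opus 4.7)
The plan is to realise both quotient categories as the same quotient of the cluster category $\C$ and then define $\theta$ as the resulting composition of equivalences.

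First I would apply Theorem \ref{equivCGamma} twice, once to $T=\overline{T}\oplus M$ and once to $T'=\overline{T}\oplus M^*$, obtaining equivalences
$$\overline{G}:\C/\text{add}(\tau T)\To \emmG,\qquad \overline{G}':\C/\text{add}(\tau T')\To \emmG'.$$
Since $\tau T=\tau\overline{T}\oplus\tau M$ and $\tau T'=\tau\overline{T}\oplus\tau M^*$, the ``difference'' between the two quotients of $\C$ is only the summand $\tau M$ versus $\tau M^*$.

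Next I would locate the simple modules $S_x$ and $S'_y$ inside these equivalences. The preceding proposition gives $S_x\simeq\text{Hom}_{\C}(T,\tau M^*)=\overline{G}(\tau M^*)$ and, dually, $S'_y\simeq\text{Hom}_{\C}(T',\tau M)=\overline{G}'(\tau M)$. Note that $\tau M^*\notin\text{add}(\tau T)$ and $\tau M\notin\text{add}(\tau T')$, so these objects are indeed non-zero in the respective quotients. Using the general fact that an additive equivalence $F:\mathcal{A}\To \mathcal{B}$ taking $\text{add}\,X$ to $\text{add}\,F(X)$ descends to an equivalence of quotients $\mathcal{A}/\text{add}\,X\To\mathcal{B}/\text{add}\,F(X)$, we obtain
$$\C/\text{add}(\tau\overline{T}\oplus\tau M\oplus\tau M^*)\simeq \emmG/\text{add}\,S_x,$$
and, symmetrically,
$$\C/\text{add}(\tau\overline{T}\oplus\tau M\oplus\tau M^*)\simeq \emmG'/\text{add}\,S'_y.$$

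Finally I would define $\theta$ as the composition of the inverse of the first equivalence with the second: both target the same quotient of $\C$ obtained by killing $\tau\overline{T}$ together with both complements $\tau M$ and $\tau M^*$, and the construction is symmetric in $M$ and $M^*$. The main obstacle is the verification that quotienting $\emmG$ by $\text{add}\,S_x$ translates under $\overline{G}$ precisely to the additional quotient by $\text{add}(\tau M^*)$ in $\C/\text{add}(\tau T)$, i.e., that a morphism in $\emmG$ factors through a direct sum of copies of $S_x$ if and only if its preimage under $\overline{G}$ factors, modulo $\text{add}(\tau T)$, through $\text{add}(\tau M^*)$. Once this compatibility is established, the identification of both module quotients with the common quotient of $\C$ yields the desired equivalence $\theta$.
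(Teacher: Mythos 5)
Your proof is correct. Note that the paper itself contains no proof of this statement: it is recalled from the literature, being the ``nearly Morita equivalence'' of Buan--Marsh--Reiten \cite{BMR}, and your argument is essentially the original one from that source --- identify both quotient categories with $\C/\mbox{add}(\tau\overline{T}\oplus\tau M\oplus\tau M^*)$, using the equivalences of Theorem \ref{equivCGamma} together with the identifications $S_x\simeq \mbox{Hom}_{\C}(T,\tau M^*)$ and $S'_y\simeq \mbox{Hom}_{\C}(T',\tau M)$ from the proposition preceding the statement. The ``main obstacle'' you flag at the end is in fact already settled by the general facts you invoke: since $\overline{G}$ is an equivalence carrying $\tau M^*$ to $S_x$, a morphism of $\mbmG$ factors through $\mbox{add}\,S_x$ if and only if its preimage in $\C/\mbox{add}(\tau T)$ factors through the image of $\mbox{add}(\tau M^*)$ there; and because $\C$ is Krull--Schmidt, the iterated quotient $\left(\C/\mbox{add}(\tau T)\right)/\mbox{add}(\tau M^*)$ coincides with $\C/\mbox{add}(\tau T\oplus \tau M^*)$, since a morphism that agrees modulo $\mbox{add}(\tau T)$ with one factoring through $\mbox{add}(\tau M^*)$ is precisely a morphism factoring through $\mbox{add}(\tau T\oplus\tau M^*)$. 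With these two routine observations spelled out, your construction of $\theta$ is complete, and its symmetry in $M$ and $M^*$ gives the equivalence of the statement.
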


\begin{obs}\label{obsGG'}
\emph{We consider $\Gamma$ and $\Gamma'$ to be the cluster tilted algebras
as in the  above theorem. Let $T_a$ be a direct summand of $\overline{T}$
and we denote by $P_a=\mbox{Hom}_{\C}(T,T_a)$ and $I_a=\mbox{Hom}_{\C}(T,\tau^2T_a)$
the indecomposable projective and injective $\Gamma$-modules corresponding to the vertex $a\in Q_{\Gamma}$,
respectively. We denote by $P'_a=\mbox{Hom}_{\C}(T',T_a)$ and $I'_a=\mbox{Hom}_{\C}(T',\tau^2T_a)$
the indecomposable projective and injective $\Gamma'$-modules corresponding to the vertex $a\in Q_{\Gamma'}$,
respectively.}

\emph{Following the equivalence of Theorem \ref{equivGG'}, note that
it is not hard to see that $\theta(P_a)=P'_a$ and $\theta(I_a)=I'_a$.}
\end{obs}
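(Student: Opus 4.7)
The plan is to realize the abstract equivalence $\theta$ of Theorem \ref{equivGG'} concretely as a composition of equivalences through the cluster category $\C$, and then read off the images of $P_a$ and $I_a$ from that description.

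First, I would apply Theorem \ref{equivCGamma} to both $T$ and $T'$, obtaining equivalences $\overline{G}\colon \C/\mbox{add}(\tau T)\to \mbmG$ and $\overline{G'}\colon \C/\mbox{add}(\tau T')\to \mbmG'$ induced by $G=\mbox{Hom}_{\C}(T,-)$ and $G'=\mbox{Hom}_{\C}(T',-)$, respectively. By the proposition preceding Theorem \ref{equivGG'}, one has $S_x\simeq \mbox{Hom}_{\C}(T,\tau M^*)=\overline{G}(\tau M^*)$, and symmetrically $S'_y\simeq \overline{G'}(\tau M)$. Hence $\overline{G}$ descends to an equivalence
\[
\mbmG/\mbox{add}\,S_x\;\simeq\;\C/\mbox{add}(\tau T\oplus \tau M^*)\;=\;\C/\mbox{add}\bigl(\tau(\overline{T}\oplus M\oplus M^*)\bigr),
\]
and an analogous statement holds for $\overline{G'}$. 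Crucially, these two quotients of $\C$ coincide, since $\tau T\oplus \tau M^*$ and $\tau T'\oplus \tau M$ generate the same additive subcategory, namely $\mbox{add}(\tau(\overline{T}\oplus M\oplus M^*))$. Thus $\theta$ is naturally identified with $\overline{G'}\circ \overline{G}^{-1}$.

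With $\theta$ described this way, the conclusion is immediate. Since $T_a$ is a summand of $\overline{T}$, it is common to both $T$ and $T'$, and $T_a\notin \mbox{add}(\tau T\oplus \tau M^*)$ (otherwise $P_a=\overline{G}(T_a)$ would vanish). Therefore $\theta(P_a)=\overline{G'}(T_a)=\mbox{Hom}_{\C}(T',T_a)=P'_a$. Applying the same reasoning with $\tau^2 T_a$ in place of $T_a$ yields $\theta(I_a)=I'_a$.

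The main point to verify carefully is that the composition $\overline{G'}\circ \overline{G}^{-1}$ indeed agrees with the $\theta$ of Theorem \ref{equivGG'}; this should follow from the intrinsic characterization of $S_x$ and $S'_y$ as the tops of the projectives associated with the mutation summands $M$ and $M^*$, or else by inspecting the construction of $\theta$ in the proof of that theorem.
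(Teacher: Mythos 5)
Your proof is correct and takes the route the paper intends: the remark offers no argument beyond ``it is not hard to see,'' and the equivalence $\theta$ of Theorem \ref{equivGG'} (from \cite{BMR}) is constructed precisely as the composite of the equivalences induced by $\mbox{Hom}_{\C}(T,-)$ and $\mbox{Hom}_{\C}(T',-)$ through the common quotient $\C/\mbox{add}(\tau(\overline{T}\oplus M\oplus M^*))$, which is exactly what you reconstruct. Your identification $\theta(P_a)=\overline{G'}(T_a)=P'_a$ and $\theta(I_a)=\overline{G'}(\tau^2 T_a)=I'_a$ is then the intended justification, including the check that $T_a$ and $\tau^2 T_a$ do not vanish in that quotient.
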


\begin{nota}\label{numu}\emph{
For a better understanding of the results, when we consider the cluster tilted algebras
$\Gamma\simeq kQ_{\Gamma}/I_{\Gamma}$  and
$\Gamma'\simeq kQ_{\Gamma'}/I_{\Gamma'}$, in order to compute the nilpotency indices of
$\Re(\mbmG)$ and $\Re(\mbmG')$, we denote such values by
 $r_u=m_u+n_u$, for each $u\in Q_{\Gamma}$,
and by  $r'_v=m'_v+n'_v$ for each $v\in Q_{\Gamma'}$, as  defined in \ref{nilpo}.}
\end{nota}
\end{sinob}

\section{The main results}

In this section, we compute the nilpotency index of the radical of the
module category of a representation-finite cluster tilted algebra.

Let  $H$ be a hereditary algebra and $\Gamma=\mbeCT$
the cluster tilted algebra, where $T$ is a tilting object in the cluster category
$\C=\D/F$, $\D$ is the bounded derived category of $\mbmH$ and $F=\tau^{-1}_{\D}[1]$.

It is well known that $\Gamma$ is representation-finite if and only if $H$ so is.
In this case, $H\simeq k\Delta$ with $\overline{\Delta}$ a Dynkin graph and the
Auslander-Reiten quiver, $\Gamma(\D)$, of $\D$ is isomorphic to $\mathbb{Z}\Delta$.

\begin{prop}
Let $H$ be a representation-finite hereditary algebra. Then the Auslander-Reiten quiver
$\Gamma (\D^b(\emph{mod}\,H))$ is a component with length.
\end{prop}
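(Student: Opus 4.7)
The plan is to invoke the criterion recalled just before the statement: if a directed component has a tree-type orbit graph, then it is a component with length. So I need to verify two things about $\Gamma(\mathcal{D}^b(\mathrm{mod}\,H))$: that it is a directed component (in one piece), and that its orbit graph is a tree.

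First I would use the identification $\Gamma(\mathcal{D})\simeq \mathbb{Z}\Delta$, which is available because $H\simeq k\Delta$ with $\overline{\Delta}$ Dynkin (this is explicitly stated right before the proposition). Connectedness of $\mathbb{Z}\Delta$ when $\Delta$ is a connected Dynkin quiver gives that $\Gamma(\mathcal{D})$ really is a single component, so the statement makes sense. Next I would check directedness: a cycle of non-zero non-isomorphisms between indecomposables in $\mathcal{D}^b(\mathrm{mod}\,H)$ cannot exist because $H$ is hereditary (so $\mathrm{Hom}_{\mathcal{D}}(X[i],Y[j])\neq 0$ forces $j-i\in\{0,1\}$ by vanishing of higher Ext), and within each shift $\mathrm{mod}\,H[i]$ the module category of a representation-finite hereditary algebra is itself directed.

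For the orbit graph, the $\tau$-orbits in $\mathbb{Z}\Delta$ are indexed by the vertices of $\Delta$, and two orbits are joined by an edge in $O(\Gamma(\mathcal{D}))$ exactly when the corresponding vertices of $\Delta$ are joined. Hence $O(\Gamma(\mathcal{D}))\simeq \overline{\Delta}$, which is a Dynkin diagram and in particular a tree. Applying the criterion recalled in the preliminaries (that a directed component with tree orbit graph is simply connected, and by \cite{BG} a component with length) then finishes the proof.

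The only mildly delicate point is the directedness step; once that is established, the orbit-graph computation is immediate from the shape $\mathbb{Z}\Delta$ and the conclusion is a direct appeal to the result of Bongartz--Gabriel quoted in the text. I do not expect any obstacles beyond being careful with the standard description of morphisms in $\mathcal{D}^b(\mathrm{mod}\,H)$ for hereditary $H$.
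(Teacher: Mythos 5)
Your proposal is correct and follows essentially the same route as the paper: identify $\Gamma(\mathcal{D}^b(\mbox{mod}\,H))$ with $\mathbb{Z}\Delta$, observe that its orbit graph is $\overline{\Delta}$ (a tree), and conclude via the Bongartz--Gabriel criterion that the component is simply connected and hence with length. The only difference is that you explicitly verify directedness (using heredity and representation-finiteness), a hypothesis the paper's proof leaves implicit; this is a reasonable extra precaution but not a different approach.
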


\begin{proof}
We analyze the orbit graph of  $\Gamma (\D^b(\mbox{mod}\,H))$.
Since $H$ is a representation-finite hereditary algebra, we have that
$H\simeq k\Delta$, with $\overline{\Delta}$ a Dynkin graph.
Moreover, $\Gamma (\D^b(\mbox{mod}\,H))\simeq \mathbb{Z}\Delta$.
It is clear that the orbit graph of $\Gamma (\D^b(\mbox{mod}\,H))$
is isomorphic to $\overline{\Delta},$ which is of tree type.
Therefore, $\Gamma (\D^b(\mbox{mod}\,H))$ is a simply connected
translation quiver and therefore  $\Gamma (\D^b(\mbox{mod}\,H))$ is a component with length.
\end{proof}

In the next result, we give a relationship between morphisms of the categories $\mbmG$ and $\mbmG'$.

\begin{prop}\label{irredcluster}
Let $\C$ be the cluster category of a hereditary algebra $H$ and
let $\overline{T}$ be an almost complete tilting object in $\C$ with complements $M$ and $M^*$.
Consider $\Gamma=\emeCT$ and
$\Gamma'=\emph{End}_{\C}(T')^{op}$ with $T=\overline{T}\oplus M$
and $T'=\overline{T}\oplus M^*$. Let  $S_x$ and  $S'_{y}$ be
the simple tops of $\emph{Hom}_{\C}(T,M)$ and $\emph{Hom}_{\C}(T',M^*)$,
respectively, and let $\theta:\emmG/\emph{add}\,S_x\fle \emmG'/\emph{add}\,S'_{y}$
be the equivalence of Theorem  \ref{equivGG'}.

Consider  a morphism  $f:X\fle Y$  with $X,Y\in \emiG$. Then,  $f$ is an irreducible morphism in $\emmG$
which does not factor through $\emph{add}\,S_{x}$
if and only if $\theta(f)$ is an irreducible morphism in  $\emmG'$ which does not factor through $\emph{add}\,S'_{y}.$
\end{prop}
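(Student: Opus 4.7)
The plan is to invoke Bautista's characterization of irreducible morphisms between indecomposables ($f$ irreducible iff $f \in \Re \setminus \Re^2$) together with the observation that any morphism factoring through $\mbox{add}\,S_x$ automatically lies in $\Re^2$ when source and target are indecomposable and not isomorphic to $S_x$. By the symmetry of the statement (applied to $\theta^{-1}$), it suffices to prove the ``only if'' direction. First note that if $X\cong S_x$ or $Y\cong S_x$ then $f$ factors through $\mbox{add}\,S_x$ via the identity, so the hypothesis forces $X, Y\not\cong S_x$; and since $S_x$ is zero in $\mbmG/\mbox{add}\,S_x$ while $\theta$ is an equivalence, $\theta(X)$ and $\theta(Y)$ are indecomposable $\Gamma'$-modules with neither isomorphic to $S'_y$.

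The key lemma is: for $U, V$ indecomposable $\Gamma$-modules not isomorphic to $S_x$, any $\sigma\colon U \to V$ factoring as $U \to S_x^n \to V$ lies in $\Re^2(U, V)$, because each scalar entry of the two factors is a morphism between distinct indecomposables and so belongs to $\Re$. The analogous statement holds in $\mbmG'$. This immediately takes care of the ``does not factor through $\mbox{add}\,S'_y$'' clause of the conclusion: once $\theta(f)$ is shown to lie in $\Re\setminus \Re^2$, the lemma forbids it from factoring through $\mbox{add}\,S'_y$.

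The heart of the proof is showing $\theta(f)$ is irreducible in $\mbmG'$. Suppose for contradiction that $\theta(f) = g' h'$ with $h'\colon \theta(X) \to \theta(Z)$ not split mono and $g'\colon \theta(Z) \to \theta(Y)$ not split epi (the degenerate cases where $\theta(f)$ itself is split mono, split epi, or iso are handled by the same device and contradict irreducibility of $f$). Using $\theta$, lift $g', h'$ to $g\colon Z \to Y$ and $h\colon X \to Z$ in $\mbmG$ satisfying $f - gh = \iota \pi$ for some $\pi\colon X \to S_x^n$ and $\iota\colon S_x^n \to Y$. Combining yields the factorization
\begin{equation*}
f = (g, \iota)\binom{h}{\pi}
\end{equation*}
of $f$ through $Z \oplus S_x^n$ in $\mbmG$, and irreducibility of $f$ forces the right factor $\binom{h}{\pi}$ to be split mono or the left factor $(g,\iota)$ to be split epi. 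In the first case, a retraction $(s, t)\colon Z \oplus S_x^n \to X$ gives $sh = 1_X - t\pi$; since $X \not\cong S_x$ we have $t\pi \in \Re(X,X)$, and locality of $\mbox{End}(X)$ makes $sh$ invertible, so $h$ is split mono in $\mbmG$. Hence $[h]$ is split mono in the quotient, $[h']$ is split mono in $\mbmG'/\mbox{add}\,S'_y$ via $\theta$, and a final use of locality of $\mbox{End}(\theta(X))$ (together with $\theta(X) \not\cong S'_y$) upgrades this to $h'$ being split mono in $\mbmG'$, the required contradiction. The dual case gives $g'$ split epi.

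The main technical obstacle is the bookkeeping in this central lifting step: the equivalence $\theta$ lives between quotient categories, so the lifts $g, h$ are defined only up to maps through $\mbox{add}\,S_x$ (and similarly on the $\mbmG'$ side). The device that makes everything work is absorbing the resulting correction $\iota\pi$ into a genuine factorization of $f$ through $Z \oplus S_x^n$; afterwards, the repeated use of locality of the endomorphism ring of an indecomposable—to convert ``splittings modulo $\Re$'' into honest splittings—knits the two sides together.
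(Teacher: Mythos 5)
Your proposal is correct, and at the top level it is the same proof as the paper's: both arguments transport a factorization of $\theta(f)$ back through the equivalence $\theta$, apply irreducibility of $f$ to the lifted factorization, push the resulting splitting forward again, and obtain the converse direction from a quasi-inverse of $\theta$. Where you differ is in how the quotient-category nature of $\theta$ is handled, and your version is the more careful one. The paper argues as if $\theta$ were an equivalence of module categories: from $\theta(f)=\theta(h)\theta(g)=\theta(hg)$ it concludes $f=hg$ exactly, and from ``$g$ is a section in $\mbox{mod}\,\Gamma$'' it concludes that $\theta(g)$ is a section in $\mbox{mod}\,\Gamma'$. But faithfulness of $\theta$ only yields that $f-hg$ factors through $\mbox{add}\,S_x$, and functoriality only yields splittings in the quotient category. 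Your two devices address exactly these points: absorbing the correction term into the honest factorization $f=(g,\iota)\binom{h}{\pi}$ through $Z\oplus S_x^n$, so that irreducibility of $f$ can legitimately be applied in $\mbox{mod}\,\Gamma$; and invoking locality of $\mbox{End}(X)$ and $\mbox{End}(\theta(X))$, together with the observation that maps factoring through the simple between indecomposables not isomorphic to it lie in $\Re^2$ (hence in the radical), to upgrade ``split modulo the simple'' to genuinely split. So your Bautista-based bookkeeping is best described as a rigorous completion of the same route rather than a new one; what it buys is correctness at the two places where the paper's literal text has gaps, at the cost of some length. The one point you do gloss --- the middle object of a factorization of $\theta(f)$ is a priori only quotient-isomorphic, not equal, to some $\theta(Z)$ --- is glossed identically by the paper, and it is repaired by the same locality device you already use.
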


\begin{proof}
Let  $\Gamma$ and $\Gamma'$ be the cluster tilted algebras as above. Let $X$ and $Y$ be
indecomposable modules in $\mbmG$ and let $f:X\fle Y$ be a non-zero morphism such that
$f$ does not factor through $\mbox{add}\,S_{x}$. By the equivalence of the Theorem \ref{equivGG'},
we have that $\theta(f):\theta(X)\fle \theta(Y)$ is a non-zero morphism and moreover $\theta(f)$ does not factor through $\emph{add}\,S'_{y}$

Assume that $f$ is irreducible. We prove that $\theta(f)$ so is.
In fact, assume that $\theta(f)$ is a section. Then there exists
a morphism  $\til{f'}:\theta(Y)\fle \theta(X)$ such that $\til{f'}\theta(f)=1_{\theta(X)}$.
Moreover, $\til{f'}$ does not factor through $\mbox{add}\,S'_{y}$
because $\theta(f)$ neither does.
Then, there is a morphism $f':Y\fle X$ such that $\til{f'}=\theta(f')$.
Therefore, $$\theta(1_X)=1_{\theta(X)}=\theta(f')\theta(f)=\theta(f'f)$$
\noindent and since $\theta$ is a faithful functor, then
$1_X=f'f$, which is a contradiction to the fact that $f$ is not a section.
Thus, we prove that $\theta(f)$ is not a section.

Analogously, we can prove that $\theta(f)$ is not a retraction.

Now, assume that the is a $\Gamma'$-module $\til{Z}$ and that there are morphisms
$\til{g}:\theta(X)\fle \til{Z}$ and $\til{h}:\til{Z}\fle \theta(Y)$, such that
$\theta(f)=\til{h}\til{g}$. Since $\theta(f)$ does not factor though $\mbox{add}\,S'_{y}$, we infer that
neither do the morphisms $\til{g}$ y $\til{h}$.
By Theorem \ref{equivGG'}, there exist $Z\in \mbmG$ and morphisms
$g:X\fle Z$ and $h:Z\fle Y$ which do not factor trough $\mbox{add}\,S_{x}$ such that
$\til{g}=\theta(g)$ and $\til{h}=\theta(h)$. Then, $\theta(f)=\theta(h)\theta(g)=\theta(hg)$
and consequently $f=hg$. Since $f$ is an irreducible morphism,
then $g$ is a section (and therefore $\theta(g)$ also does) or
$h$ is a retraction (and therefore $\theta(h)$ also does). Thus,
$\theta(f)$ is an irreducible morphism.

The converse follows by considering  $\theta'$ the quasi-inverse equivalence of $\theta$.
\end{proof}

Our next goal is to prove that the nilpotency index
is invariant under mutation.
\vspace{.1in}

Following the above notation, we donote by $a$  the vertex of $Q_{\Gamma}$ and  of $Q_{\Gamma'}$,
which come from $T_a$, a direct indecomposable summand of  $\overline{T}$,
and we denote by $x$ ($y$, respectively) the vertex of $Q_{\Gamma}$  ($Q_{\Gamma'}$, respectively)
which come from $M$, the summand of $T$ ($M^*$, the summand of $T'$, respectively).
\vspace{.1in}

We start with some lemmas in order to prove one of the main theorems of this section.

\begin{lem}\label{ra=r'a}
Let $\C$ be a cluster category of a representation-finite hereditary algebra $H$, and
let $\overline{T}$ be an almost complete tilting object in $\C$ with complements $M$ and $M^*$.
Consider $\Gamma=\emph{End}_{\C}(\overline{T}\oplus M)^{op}\simeq kQ_{\Gamma}/I_{\Gamma}$ and
$\Gamma'=\emph{End}_{\C}(\overline{T}\oplus M^*)^{op}\simeq kQ_{\Gamma'}/I_{\Gamma'}$
the cluster tilted algebras.
Then, for all indecomposable summand  $T_a$ of $\overline{T}$, we have that $r_a=r'_a$.
\end{lem}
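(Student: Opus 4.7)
The plan is to use the characterization from \cite[Theorem A]{C} recalled in Section~\ref{nilpo}: $r_a$ (resp.\ $r'_a$) equals the length of any path of irreducible morphisms between indecomposable modules from $P_a$ (resp.\ $P'_a$) to $I_a$ (resp.\ $I'_a$) that passes through $S_a$ (resp.\ $S'_a$). It therefore suffices to exhibit paths of a common length in both module categories, and the natural tool is the equivalence $\theta:\mbmG/\mbox{add}\,S_x\fle\mbmG'/\mbox{add}\,S'_y$ of Theorem~\ref{equivGG'}.

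First I would identify the distinguished modules under $\theta$. Since $T_a$ is an indecomposable summand of $\overline{T}$, the vertex $a$ is distinct from both $x$ and $y$, and the indecomposables $P_a$, $I_a$ and $S_a$ remain nonzero in the quotient $\mbmG/\mbox{add}\,S_x$. Remark~\ref{obsGG'} gives $\theta(P_a)=P'_a$ and $\theta(I_a)=I'_a$; combined with the fact that $\theta$ is an equivalence and $S_a=\mbox{top}\,P_a$ is categorically determined, this yields $\theta(S_a)=S'_a$.

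Next I would fix a path of irreducible morphisms between indecomposable $\Gamma$-modules
\[
P_a=X_0\fle X_1\fle \cdots\fle X_k=S_a\fle \cdots\fle X_{r_a}=I_a
\]
realizing length $r_a$, and I would argue it can be chosen so that no intermediate $X_i$ is isomorphic to $S_x$. Granted this, each irreducible morphism $f_j\colon X_{j-1}\fle X_j$ has source and target distinct from $S_x$; consequently $f_j$ cannot factor through $\mbox{add}\,S_x$, since a factorization $f_j=hg$ through a module in $\mbox{add}\,S_x\simeq S_x^n$ would, by irreducibility of $f_j$, force either $g$ to be a section (so $X_{j-1}\simeq S_x$) or $h$ to be a retraction (so $X_j\simeq S_x$). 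Proposition~\ref{irredcluster} then implies each $\theta(f_j)$ is irreducible in $\mbmG'$, producing a path of length $r_a$ from $P'_a$ through $S'_a$ to $I'_a$, and the characterization of $r'_a$ yields $r'_a=r_a$.

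The main obstacle is precisely the avoidance claim for $S_x$. To establish it I would pass to the cluster category: under $G=\mbox{Hom}_{\C}(T,-)$ the modules $P_a$, $I_a$ and $S_x$ are the images of $T_a$, $\tau^2T_a$ and $\tau M^*$ respectively, and by Remark~\ref{caminoC} together with Theorem~\ref{equivCGamma} paths of irreducible morphisms in $\mbmG$ lift to paths in $\C$ of the same length. Since $\mbmG\simeq\C/\mbox{add}(\tau T)$ and $\mbmG'\simeq\C/\mbox{add}(\tau T')$ differ only by swapping $\tau M$ with $\tau M^*$, exhibiting a path in $\Gamma(\C)\simeq\mathbb{Z}\Delta/\langle\tau^{-1}[1]\rangle$ from $T_a$ to $\tau^2T_a$ through the preimage of $S_a$ that avoids the finite set $\mbox{add}(\tau\overline{T})\cup\{\tau M,\tau M^*\}$ simultaneously yields a path of length $r_a$ in $\mbmG$ avoiding $S_x$ and a corresponding path in $\mbmG'$ avoiding $S'_y$, completing the argument.
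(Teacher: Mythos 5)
Your argument founders on the claim $\theta(S_a)=S'_a$, and this is a genuine gap rather than a technicality: the equivalence $\theta$ does \emph{not} take simples to simples. The quotient categories $\mbox{mod}\,\Gamma/\mbox{add}\,S_x$ and $\mbox{mod}\,\Gamma'/\mbox{add}\,S'_y$ are not abelian, so ``$S_a=\mbox{top}\,P_a$ is categorically determined'' is not available, and in fact the identification fails already in type $A_2$. Choose the labelling so that in $Q_\Gamma$ the arrow points from $x$ to $a$; then $P_a=S_a$ and $I_a=P_x$ is projective--injective, while mutation at $x$ reverses the arrow, so that in $\Gamma'$ the projective $P'_a$ has length two and $S'_a=I'_a\neq P'_a$. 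Hence $\theta(S_a)=\theta(P_a)=P'_a\neq S'_a$, where the first equality is forced by Remark \ref{obsGG'}, which you yourself invoke. (A similar computation for $1\to 2\to 3$ mutated at the middle vertex gives $\theta(S_1)=P'_2$ and $\theta(S_3)=P'_3$.) Consequently you have no guarantee that the image path passes through $S'_a$, so the characterization of $r'_a$ as the common length of paths through $S'_a$ cannot be invoked, and the concluding step collapses. Note also that this characterization requires the paths to have non-zero composite (in a cluster tilted algebra of type $A_3$ given by a $3$-cycle the Auslander--Reiten quiver is cyclic, so there are arbitrarily long paths through $S_a$ from $P_a$ to $I_a$), a hypothesis you never track.

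By contrast, the step you single out as the main obstacle --- choosing the path so as to avoid $S_x$ --- is no obstacle at all, and your proposed fix (constructing paths in $\mathbb{Z}\Delta$ avoiding a prescribed finite set of vertices) is both unproven and unnecessary. Any path from $P_a$ to $I_a$ with non-zero composite automatically avoids $S_x$: if some vertex of the path equalled $S_x$, the corresponding prefix (respectively suffix) would be a non-zero morphism in $\mbox{Hom}_{\Gamma}(P_a,S_x)$ (respectively $\mbox{Hom}_{\Gamma}(S_x,I_a)$), and both spaces vanish since $a\neq x$. This Hom-vanishing is exactly the observation made in the paper's proof, which then avoids your problematic step altogether by arguing with depth instead of with the location of the simple: a non-zero $f_a\colon P_a\fle I_a$ factoring through $S_a$ has ${\rm dp}(f_a)=r_a$ by Lemma \ref{clau2.5}, is decomposed via Proposition \ref{aus}, and is transported by $\theta$ (using Proposition \ref{irredcluster}) to a non-zero morphism in $\Re_{\Gamma'}^{r_a}(P'_a,I'_a)$; then Lemma \ref{clau2.5} --- both cases (a) and (b), so it is irrelevant whether the transported morphism factors through $S'_a$ --- bounds its depth by $r'_a$, giving $r_a\leq r'_a$, and symmetry finishes. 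Your path version can be repaired in exactly the same way: the $\theta$-image of your path is a non-zero composite of $r_a$ irreducible morphisms from $P'_a$ to $I'_a$, and Lemma \ref{clau2.5} then yields $r_a\leq r'_a$ with no reference to $S'_a$; but at that point the argument \emph{is} the paper's proof.
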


\begin{proof}
Let $\Gamma$ and $\Gamma'$ be cluster tilted algebras as in the statement, and
let $T_a$ be an indecomposable summand of $\overline{T}$. Consider
$P_a$, $S_a$  and $I_a$ the projective, simple and injective $\Gamma$-modules, respectively,
corresponding to the vertex $a\in Q_{\Gamma}$, and $P'_a$, $S'_a$  and  $I'_a$
the projective, simple and injective $\Gamma'$-modules, respectively,
corresponding to the vertex $a\in Q_{\Gamma'}$.
Let $r_a$ and $r'_a$ be the bounds defined in Notation \ref{numu}.
We prove that $r_a=r'_a.$

Let $f_a:P_a\fle I_a$ be a non-zero morphism in $\mbmG$ that
factors trough $S_a$. By Lemma \ref{clau2.5},  we have that
${\rm dp}(f_a)=r_a.$
Therefore, by Proposition \ref{aus}, we can write the morphism $f_a$ as follows
$$f_a=\Sigma_{i=1}^sg_if_i$$
\noindent  for some $s\geq 1$, where $f_i\in \Re_{\Gamma}(P_a,X_i)$, with $X_i\in \mbiG$,
and $g_i$ is a finite sum of composition of $r_a-1$ irreducible morphisms between indecomposable modules, for $i=1, \dots, s$.

Let $S_x$ be the simple top of the projective  $\Gamma$-module $P_x=\mbox{Hom}_{\C}(T,M)$.
Since $S_a\neq S_x$, neither $f_i$ nor $g_i$ factor through
$\mbox{add}\,S_x$, because $\mbox{Hom}_{\Gamma}(P_a,S_x)=0=\mbox{Hom}_{\Gamma}(S_x,I_a)$.
Then, by the equivalence $\theta:\mbmG/\mbox{add}\,S_x\fle \mbmG'/\mbox{add}\,S_{y}$ defined above,
we have that $\theta(f_a)=\Sigma_{i=1}^s\theta(g_i)\theta(f_i)$ is a non-zero morphisms, where each
$\theta(f_i)\in \Re_{\Gamma'}(\theta(P_a),\theta(X_i))$. Moreover, by Proposition \ref{irredcluster},
each $\theta(g_i)$ is a finite sum of composition of $r_a-1$ irreducible morphisms between indecomposable modules.
Then, $\theta(f_a)\in \Re_{\Gamma'}^{r_a}(\theta(P_a),\theta(I_a))$, that is,
 $\theta(f_a)\in \Re_{\Gamma'}^{r_a}(P'_a,I'_a)$. By Lemma \ref{clau2.5} we have that  $r_a\leq r'_a$.

Similarly, we can prove that $r'_a\leq r_a$. Hence,  $r_a=r'_a$ as we wish to prove.
\end{proof}

\begin{lem}\label{rx=r'y}
Let $\C$ be the cluster category of a representation-finite hereditary algebra $H$.
let $\overline{T}$ be an almost complete tilting object in $\C$ with complements $M$ and $M^*$.
Consider $\Gamma=\emph{End}_{\C}(\overline{T}\oplus M)^{op}\simeq kQ_{\Gamma}/I_{\Gamma}$ and
$\Gamma'=\emph{End}_{\C}(\overline{T}\oplus M^*)^{op}\simeq kQ_{\Gamma'}/I_{\Gamma'}$
the cluster tilted algebras.
Let $x$ and $y$ be the vertices of $Q_{\Gamma}$  and $Q_{\Gamma'}$, respectively
which come from the summands $M$ of $T$ and $M^*$ of $T'$, respectively.
Then $r_x=r'_y$.
\end{lem}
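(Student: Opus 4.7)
The plan is to interpret both $r_x$ and $r'_y$ as lengths of specific paths in the cluster category $\C$ and then use that the Auslander-Reiten translation $\tau$ of $\C$ is an autoequivalence to identify the two lengths.

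First, by \cite[Remark 1]{C} as recalled in \ref{nilpo}, $r_x=n_x+m_x$ equals the length of any path of irreducible morphisms between indecomposable $\Gamma$-modules from $P_x$ to $I_x$ passing through $S_x$, the segment from $P_x$ to $S_x$ having length $n_x$ and that from $S_x$ to $I_x$ having length $m_x$. Combining Theorem \ref{equivCGamma} with the identifications $P_x=\mbox{Hom}_\C(T,M)$, $I_x=\mbox{Hom}_\C(T,\tau^2 M)$, and $S_x\simeq \mbox{Hom}_\C(T,\tau M^*)$ (the last from the proposition preceding this lemma), such a path lifts, preserving length, to a path in $\C$ of the form $M\to \tau M^*\to \tau^2 M$. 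Analogously, $r'_y=n'_y+m'_y$ is the length of a path in $\C$ of the form $M^*\to \tau M\to \tau^2 M^*$, with segments of lengths $n'_y$ from $M^*$ to $\tau M$ and $m'_y$ from $\tau M$ to $\tau^2 M^*$.

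Next, I exploit that $\tau$, as an autoequivalence of $\C$ induced from $\tau$ on $\D$, sends a path of irreducible morphisms of length $\ell$ between indecomposables to a path of length $\ell$ between their $\tau$-translates. Applying $\tau$ to the segment $M\to\tau M^*$ of length $n_x$ produces a path $\tau M\to \tau^2 M^*$ of length $n_x$ in $\C$. By Remark \ref{caminoC}, every non-zero path of irreducible morphisms in $\C$ lifts to a path of the same length in $\D$; since $\Gamma(\D)\cong \mathbb{Z}\Delta$ is a component with length, the length of a non-zero path between two fixed indecomposables of $\C$ is intrinsic to the pair of endpoints. Hence the segment $\tau M\to\tau^2 M^*$ appearing in the $r'_y$-path must also have length $n_x$, giving $m'_y=n_x$. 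The analogous argument applied to $\tau^{-1}$ and to the segment $\tau M^*\to \tau^2 M$ of length $m_x$ produces a path $M^*\to \tau M$ of length $m_x$, so that $n'_y=m_x$. Combining these equalities,
\[
r'_y=n'_y+m'_y=m_x+n_x=r_x.
\]

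The main obstacle is to guarantee that path lengths between two fixed indecomposables of $\C$ are genuinely intrinsic quantities, so that the $\tau$-autoequivalence step yields actual equalities. This relies on Remark \ref{caminoC} together with the simply connected, component-with-length structure of $\Gamma(\D)\cong \mathbb{Z}\Delta$ established in the proposition at the start of this section; a minor additional point is to handle the degenerate cases $n_x=0$ or $m_x=0$ (where the corresponding segment is trivial), which are dealt with symmetrically by the same $\tau$-translate argument.
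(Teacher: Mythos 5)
Your outline mirrors the paper's proof: both arguments reduce the lemma to the two equalities $n_x=m'_y$ and $m_x=n'_y$, realize these numbers as lengths of non-zero paths of irreducible morphisms in $\C$ (from $M$ to $\tau M^*$, from $\tau M^*$ to $\tau^2M$, from $M^*$ to $\tau M$, from $\tau M$ to $\tau^2M^*$), and then compare a $\tau$-translate of one path with the other, invoking the fact that $\Gamma(\D)\cong\mathbb{Z}\Delta$ is a component with length. However, there is a genuine gap at exactly the step you yourself flag as the main obstacle: the claim that the length of a non-zero path between two fixed indecomposables of $\C$ is \emph{intrinsic} to the pair of endpoints. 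This does not follow from Remark \ref{caminoC} together with the with-length property of $\Gamma(\D)$, as you assert. The cluster category is the orbit category $\D/F$, so two non-zero paths in $\C$ from $\til{X}$ to $\til{Y}$ lift, a priori, to paths in $\D$ from $F^iX$ to $Y$ for possibly \emph{different} exponents $i$; such lifts are not parallel paths in $\mathbb{Z}\Delta$, and the with-length property says nothing about them --- indeed, paths in $\mathbb{Z}\Delta$ from $F^iX$ to $Y$ and from $F^jX$ to $Y$ with $i\neq j$ have different lengths whenever both exist. So the step ``hence the segment $\tau M\to\tau^2M^*$ must also have length $n_x$'' is unsupported as written.

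The missing ingredient is precisely what the paper's proof is built around: since $H$ is representation-finite, for indecomposables one has $\mbox{Hom}_{\C}(\til{X},\til{Y})=\mbox{Hom}_{\D}(F^{-1}X,Y)\oplus\mbox{Hom}_{\D}(X,Y)$ with \emph{at most one} non-zero summand, so all non-zero compositions from $\til{X}$ to $\til{Y}$ lift into the same summand, i.e., to genuinely parallel paths in $\D$, where the with-length property of $\Gamma(\D)$ applies. Moreover, to compare the $\tau$-translate of the path $M\to\tau M^*$ with the path $\tau M\to\tau^2M^*$ coming from $\mbox{mod}\,\Gamma'$, one must check that the summand which is non-zero for the pair $(M,\tau M^*)$ corresponds under $\tau$ to the summand which is non-zero for $(\tau M,\tau^2M^*)$; the paper does this via the isomorphisms
$\mbox{Hom}_{\D}(\tau M[-1],\tau M^*)\simeq\mbox{Hom}_{\D}(\tau^2M[-1],\tau^2M^*)\simeq\mbox{Hom}_{\D}(F^{-1}\tau M,\tau^2M^*)$,
and then concludes $n_x=m'_y$ from parallel paths in $\D$. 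With these two points supplied, your argument closes and essentially becomes the paper's proof; without them, the pivotal ``intrinsic length'' claim is an assertion of what needs to be proved.
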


\begin{proof}
Let $\Gamma\simeq kQ_{\Gamma}/I_{\Gamma}$ and $\Gamma'\simeq kQ_{\Gamma'}/I_{\Gamma'}$
be the cluster tilted algebras defined as above.

To prove that $r_x=r'_y$, we shall prove the fact that  $n_x=m'_{y}$ and $m_x=n'_{y},$ where $r_x=n_x+m_x$ and
$r'_y=n'_y+m'_y$ are the bounds  defined in Notation \ref{numu}.

Consider a non-zero morphism $f_x:P_x\fle S_x$ and the irreducible morphism $i_x:\mbox{rad}P_x \rightarrow P_x$. Since the cluster tilted algebra $\Gamma$ is representation-finite then by \cite[Theorem A]{CLT} we know that the right degree of  $i_x$ is finite and more precisely it is $n_x$. Therefore, by the dual result of \cite[Proposition 3.4]{CLT},
we have that ${\rm dp}(f_x)=n_x.$
Hence, by Proposition \ref{aus} we know that there is a non-zero path
of irreducible morphisms between indecomposable modules of length $n_x$ in $\mbmG$ as follows

$$\phii_x:P_x\stackrel{h_1} \longrightarrow X_1 \stackrel{h_2} \longrightarrow X_2\longrightarrow \dots \longrightarrow X_{n_x-1}\stackrel{h_{n_x}} \longrightarrow S_x.$$

By the equivalence defined in Theorem \ref{equivCGamma},
it is induced by a non-zero path of also $n_x$ irreducible morphisms between indecomposable objects in the cluster
category $\C$, such that it does not factor trough $\mbox{add}\,\tau T$

\begin{equation}\label{camphii}
\til{\phii}_x:{M}\stackrel{\til{h}_1} \longrightarrow \til{X}_1 \stackrel{\til{h}_2} \longrightarrow \til{X}_2\longrightarrow \dots \longrightarrow \til{X}_{n_x-1}\stackrel{\til{h}_{n_x}} \longrightarrow {\tau M^{*}}
\end{equation}

\noindent where  $P_x=\mbox{Hom}_{\C}(T,{M})$,
$S_x=\mbox{Hom}_{\C}(T,\tau M^{*})$ and  $X_i=\mbox{Hom}_{\C}(T,\til{X}_i)$ for
$1\leq i\leq n_x-1$.

On the other hand, if we consider a non-zero morphism $g'_{y}:S'_{y}\fle I'_{y}$ in $\mbmG'$,
by Theorem A and  Proposition 3.4 in \cite{CLT},  we have that ${\rm dp}(g'_y)=m'_y.$
Hence, with an analogous analysis to the previous one, there exists a non-zero path $\psi'_{y}$
of $m'_y$ irreducible morphisms between indecomposable modules from $S'_{y}$ to $I'_{y}$ in $\mbmG'$.
Moreover, such a path is induced by a non-zero path $\til{\psi'}_{y}$, from $\tau M$ to $\tau^2M^*$, of
$m'_y$ irreducible morphisms between indecomposable modules in the cluster category $\C$ and such that it does not
factor trough $\mbox{add}\,T'$

\begin{equation}\label{campsi}
\til{\psi'}_{y}:\tau M\fle \til{Y'}_1\fle \til{Y'}_2\fle \dots \fle \til{Y'}_{m_{y}-1}\fle \tau^2M^*
\end{equation}

\noindent because
$S'_{y}=\mbox{Hom}_{\C}(T',\tau{M})$ and $I'_{y}=\mbox{Hom}_{\C}(T',\tau^2{M^*})$.

We also have that
$\til{\phii}_x\in \mbox{Hom}_{\C}(M,\tau M^*)$, with $ \til{\phii}_x\neq 0$, where

$$\mbox{Hom}_{\C}(M,\tau M^*)=\mbox{Hom}_{\D}(F^{-1}M,\tau M^*)\oplus \mbox{Hom}_{\D}(M,\tau M^*);$$

\noindent and $\til{\psi'}_{y}\in \mbox{Hom}_{\C}(\tau M,\tau^2 M^*)$, with $\til{\psi'}_{y}\neq 0$, where

$$\mbox{Hom}_{\C}(\tau M,\tau^2 M^*)=\mbox{Hom}_{\D}(F^{-1}\tau M,\tau^2 M^*)\oplus \mbox{Hom}_{\D}(\tau M,\tau^2 M^*).$$

\noindent In both cases, only one of the summands is non-zero since $H$ is
representation-finite.

Hence, if $\mbox{Hom}_{\D}(F^{-1}M,\tau M^*)\neq 0$, then
$\mbox{Hom}_{\D}(F^{-1}\tau M,\tau^2 M^*)\neq 0$ because

\begin{displaymath}
\begin{array}{ccl}
\mbox{Hom}_{\D}(F^{-1}M,\tau M^*) & = &\mbox{Hom}_{\D}(\tau M[{-1}],\tau M^*)\\
 & \simeq &\mbox{Hom}_{\D}(\tau^2 M[-1],\tau^2 M^*)\\
 & \simeq &\mbox{Hom}_{\D}(F^{-1}\tau M,\tau^2 M^*).
\end{array}
\end{displaymath}

Therefore, the path in (\ref{camphii}) is induced by a path of irreducible morphisms between indecomposable modules of length $n_x$
from $\tau M[{-1}]$ to $ \tau M^*$  in $\D$,
and the path in  (\ref{campsi}) is induced by a path of $m'_y$ irreducible morphisms
between indecomposable modules from $\tau^2 M[{-1}]$ to $\tau^2 M^*$ in $\D$.
Moreover, since $\mbox{Hom}_{\D}(\tau M[{-1}],\tau M^*)\simeq \mbox{Hom}_{\D}(\tau^2 M[-1],\tau^2 M^*)$ and
$\Gamma(\D)$ is a translation quiver with length,  then $n_x=m'_{y}$.

Now, if $\mbox{Hom}_{\D}(M,\tau M^*)\neq 0$, with the same argument as before we can conclude that $n_x=m'_{y}$.

Analogously, considering a non-zero morphism $g_x:S_x\fle I_x$
in $\mbmG$ and a non-zero morphism $f'_{y}:P'_{y}\fle S'_{y}$
in $\mbmG'$, with a similar analysis as above, we conclude that  $m_x=n'_{y}.$
Thus, $r_x=m_x+n_x=n'_{y}+m'_{y}=r'_{y}.$
\end{proof}

Now, we are in position to show one Theorem A.

\begin{teo}\label{nilcluster}
Let $\C$ be the cluster category of a representation-finite hereditary algebra $H$.
Consider $\Gamma=\emph{End}_{\C}(\overline{T}\oplus M)^{op}$ and
$\Gamma'=\emph{End}_{\C}(\overline{T}\oplus M^*)^{op}$ the cluster
tilted algebras, where $\overline{T}$ is an almost complete tilting object in $\C$ with complements $M$ and $M^*$.
 Let $r_{\Gamma}$ and $r_{\Gamma'}$
be the nilpotency indices of $\Re(\emmG)$ and $\Re(\emmG')$, respectively.
Then, $r_{\Gamma}=r_{\Gamma'}$.
\end{teo}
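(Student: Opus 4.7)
The plan is to apply \cite[Theorem A]{C} to both $\Gamma$ and $\Gamma'$, which expresses each nilpotency index as the maximum of the numbers $r_u$ (respectively $r'_v$) over all vertices of the ordinary quivers $Q_{\Gamma}$ and $Q_{\Gamma'}$, plus one. The vertices of $Q_{\Gamma}$ correspond bijectively to the indecomposable summands of $T=\overline{T}\oplus M$, and those of $Q_{\Gamma'}$ correspond to the indecomposable summands of $T'=\overline{T}\oplus M^*$. So both vertex sets share a common part (one vertex $a$ for each indecomposable summand $T_a$ of $\overline{T}$) and differ only in the single vertex coming from the complement: $x$ for $M$ in $Q_{\Gamma}$, and $y$ for $M^*$ in $Q_{\Gamma'}$.

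The strategy is then a two-case split, exactly matching the two lemmas already proved. First, for every vertex $a$ corresponding to a common summand $T_a$ of $\overline{T}$, Lemma \ref{ra=r'a} gives $r_a=r'_a$. Second, for the distinguished vertices coming from the mutated complements, Lemma \ref{rx=r'y} gives $r_x=r'_y$. Combining both identifications vertex by vertex yields
\begin{equation*}
\max\{r_u\}_{u\in (Q_{\Gamma})_{0}}=\max\bigl(\max_{T_a\mid\overline{T}} r_a,\, r_x\bigr)=\max\bigl(\max_{T_a\mid\overline{T}} r'_a,\, r'_y\bigr)=\max\{r'_v\}_{v\in (Q_{\Gamma'})_{0}}.
\end{equation*}

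Finally, I would invoke \cite[Theorem A]{C} to conclude
\begin{equation*}
r_{\Gamma}=\max\{r_u\}_{u\in (Q_{\Gamma})_{0}}+1=\max\{r'_v\}_{v\in (Q_{\Gamma'})_{0}}+1=r_{\Gamma'},
\end{equation*}
which is the desired equality. There is no genuine obstacle here, since the substantive content has been absorbed into Lemmas \ref{ra=r'a} and \ref{rx=r'y}; the only care required is to make explicit that the vertex sets $(Q_{\Gamma})_{0}$ and $(Q_{\Gamma'})_{0}$ are put in bijection under the identification of summands of $\overline{T}$ and the pairing $x\leftrightarrow y$, so that the two maxima are taken over identifiable (and coincident) families of integers.
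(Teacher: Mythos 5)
Your proposal is correct and matches the paper's own proof essentially word for word: both express $r_{\Gamma}$ and $r_{\Gamma'}$ via \cite[Theorem A]{C} as maxima over vertices identified with indecomposable summands of $T$ and $T'$, then split the vertex set into the common part (handled by Lemma \ref{ra=r'a}) and the mutated vertex pair $x\leftrightarrow y$ (handled by Lemma \ref{rx=r'y}). Nothing further is needed.
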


\begin{proof}
Let  $\Gamma\simeq kQ_{\Gamma}/I_{\Gamma}$ and $\Gamma'\simeq kQ_{\Gamma'}/I_{\Gamma'}$ be the
cluster tilted algebras defined as above.
Since $H$ is a representation-finite algebra, then so are $\Gamma$ and $\Gamma'$.
We denote by $r_{\Gamma}$ and  $r_{\Gamma'}$, the nilpotency indices of
$\Re(\mbmG)$ and $\Re(\mbmG'),$ respectively. We  prove that $r_{\Gamma}=r_{\Gamma'}$.
In fact, we know that

$$r_{\Gamma}=\mbox{max}\{r_u\mid u\in (Q_{\Gamma})_0\}+1=\mbox{max}\{r_u\mid T_u\in \mbox{ind}(\mbox{add}\,T)\}+1$$

\noindent and

$$r_{\Gamma'}=\mbox{max}\{r'_v\mid v\in (Q_{\Gamma'})_0\}+1=\mbox{max}\{r'_v\mid T_v\in \mbox{ind}(\mbox{add}\,T')\}+1.$$

By Lemma \ref{ra=r'a} and Lemma \ref{rx=r'y}, we have that

\begin{displaymath}
\begin{array}{ccl}
r_{\Gamma} & = &\mbox{max}\{r_u\mid T_u\in \mbox{ind}(\mbox{add}\,T)\}+1\\
 & = &\mbox{max}\{r_a\mid T_a\in \mbox{ind}(\mbox{add}\,\overline{T}),r_x\}+1\\
 & = &\mbox{max}\{r'_a\mid T_a\in \mbox{ind}(\mbox{add}\,\overline{T}),r'_y\}+1\\
& = &\mbox{max}\{r_v\mid T_v\in \mbox{ind}(\mbox{add}\,T')\}+1\\
&= &r_{\Gamma'},
\end{array}
\end{displaymath}

\noindent proving the result.
\end{proof}

For the convenience if the reader we state \cite[Theorem 4.11]{Z}.

\begin{teo}\label{ppalher}
Let $H=k\Delta$ be a representation-finite hereditary algebra and let $r_H$
be the nilpotency index of $\Re(\emph{mod}\,H).$ Then the  following conditions hold.
\begin{enumerate}
\item[(a)] If $\overline{\Delta}=A_n$, then $r_H=n,$ for $n\geq 1$.
\item[(b)] If $\overline{\Delta}=D_n$, then $r_H=2n-3,$ for $n\geq 4$.
\item[(c)] If $\overline{\Delta}=E_6$, then $r_H=11.$
\item[(d)] If $\overline{\Delta}=E_7$, then $r_H=17.$
\item[(e)] If $\overline{\Delta}=E_8$, then $r_H=29.$
\end{enumerate}
\end{teo}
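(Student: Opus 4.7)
The plan is to apply the formula $r_H = \max\{r_a \mid a \in (Q_H)_0\} + 1$ from \cite{C}, recalled in \ref{nilpo}, combined with the explicit structure of the Auslander--Reiten quiver of a representation-finite hereditary algebra. Since $H = k\Delta$ is representation-finite, $\overline{\Delta}$ is Dynkin and $\Gamma_H$ is a finite connected component embedding in $\mathbb{Z}\Delta$. By the proposition proved just before the theorem, the orbit graph of $\Gamma(\D^b(\mbmH))$ is $\overline{\Delta}$, a tree, so $\Gamma_H$ is itself a component with length. This guarantees that each $r_a$ is well-defined as the length of any (equivalently, every) path of irreducible morphisms between indecomposable $H$-modules going from $P_a$ through $S_a$ to $I_a$.

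The next step is to compute $r_a$ by direct inspection of the finite AR quiver. For type $A_n$ with linear orientation $1 \to 2 \to \cdots \to n$, the indecomposables are parametrized by intervals $[i,j] \subseteq \{1,\dots,n\}$, with $P_a = [a,n]$, $S_a = [a,a]$, $I_a = [1,a]$. An explicit path $[a,n] \to [a,n-1] \to \cdots \to [a,a] \to [a-1,a] \to \cdots \to [1,a]$ of length $(n-a) + (a-1) = n-1$ realizes $r_a = n-1$ for every $a$, giving $r_H = n$. For other orientations of $A_n$ the same value is obtained, since the nilpotency index depends only on $\overline{\Delta}$ (one can pass between orientations by BGP reflections which preserve the shape of $\Gamma_H$). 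For type $D_n$, a parallel analysis using the AR quiver of $kD_n$ (which has $n(n-1)$ indecomposables) yields $\max_a r_a = 2n-4$, with the maximum attained at a vertex on the long arm, hence $r_H = 2n-3$. For the exceptional types $E_6, E_7, E_8$, whose AR quivers have $36, 63, 120$ indecomposables respectively, the computation is finite and can be carried out by direct inspection, yielding $\max_a r_a = 10, 16, 28$ and so $r_H = 11, 17, 29$.

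A conceptual unification is available via the Coxeter number $h$ of $\Delta$, where $h(A_n) = n+1$, $h(D_n) = 2n-2$, $h(E_6) = 12$, $h(E_7) = 18$, $h(E_8) = 30$: in each case one has $r_H = h-1$. This reflects the fact that $\tau$ corresponds to the Coxeter transformation on the Grothendieck group and the longest $\tau$-orbit in $\Gamma_H$ has exactly $h-1$ elements, so that the longest path of irreducible morphisms from a projective to an injective (going through the appropriate simple) has length $h-2$.

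The main obstacle is organising the verification in a uniform way. While the identity $r_H = h-1$ is very clean, making it rigorous without appealing to case analysis requires a careful combinatorial argument identifying the longest path in $\Gamma_H$ through some simple $S_a$ with a path of $h-2$ irreducible morphisms, and ruling out longer paths at any other vertex. In practice the simplest route is to complete the type-by-type computation sketched above, using induction on $n$ for $A_n$ and $D_n$, and direct inspection of the finite exceptional AR quivers for $E_6, E_7, E_8$.
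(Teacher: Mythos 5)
You should first note that the paper does not prove this theorem at all: it is quoted verbatim from Zacharia \cite[Theorem 4.11]{Z} ("For the convenience of the reader we state...") and used as an external input to Theorem B. So there is no internal proof to compare with; your proposal has to stand on its own, and as written it does not. Your overall strategy is sound — use the formula $r_H=\max_a\{r_a\}+1$ of \cite{C} together with the fact that $r_a$ is the common length of paths $P_a\rightsquigarrow S_a\rightsquigarrow I_a$ (legitimate here, since $\Gamma_H$ embeds in the component with length $\mathbb{Z}\Delta$) — and your computation for linearly oriented $A_n$ is correct. But three steps are gaps or errors. First, the reduction to a single orientation: the claim that BGP reflections ``preserve the shape of $\Gamma_H$'' is false as stated. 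The shape of $\Gamma_H$, its $\tau$-orbit sizes, and the relative positions of $P_a$, $S_a$, $I_a$ genuinely depend on the orientation (already for $A_3$: the linear orientation has $\tau$-orbits of sizes $3,2,1$, while $1\to 2\leftarrow 3$ has three orbits of size $2$); reflection functors are equivalences only between subcategories missing one simple, so invariance of the nilpotency index under change of orientation requires a real argument — indeed, orientation-independence is part of what the theorem asserts. Second, for $D_n$, $E_6$, $E_7$, $E_8$ you only assert the outcome (``a parallel analysis yields'', ``direct inspection''), and these are exactly the cases carrying the content. Third, your conceptual claim that the longest $\tau$-orbit of $\Gamma_H$ has $h-1$ elements is false in general: for $D_4$ with the subspace orientation all four $\tau$-orbits have exactly $3$ elements, whereas $h-1=5$.

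All three gaps can be closed at once by working in $\Gamma(\mathcal{D}^b(\mbox{mod}\,H))\simeq\mathbb{Z}\Delta$, which the paper's first proposition of Section 2 shows is a component with length; write $\ell(X,Y)$ for the common length of paths from $X$ to $Y$ there. One has $\ell(X,\tau^{-1}X)=2$, and Gabriel's relation $\tau^{-h}=[2]$ on $\mathcal{D}^b(\mbox{mod}\,k\Delta)$ gives, for the Nakayama functor $\nu=\tau[1]$, that $\nu^2=\tau^{-(h-2)}$, hence $\ell(X,\nu^2X)=2(h-2)$; since $\ell(\nu X,\nu^2X)=\ell(X,\nu X)$ and lengths add along concatenation, this forces $\ell(X,\nu X)=h-2$ for every indecomposable $X$. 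As $I_a\cong\nu P_a$ and the non-zero composite $P_a\twoheadrightarrow S_a\hookrightarrow I_a$ guarantees a path through $S_a$, \cite[Remark 1]{C} then yields $r_a=h-2$ for every vertex $a$ and every orientation of $\Delta$, so $r_H=h-1$; with $h(A_n)=n+1$, $h(D_n)=2n-2$, $h(E_6)=12$, $h(E_7)=18$, $h(E_8)=30$ this is exactly the table in the statement. This uniform argument is what your third paragraph gestures at, but it must be run through the position of $\nu$ in $\mathbb{Z}\Delta$, not through $\tau$-orbit sizes in $\Gamma_H$, which do not behave as you claim.
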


The next corollary shall be important to prove Theorem B, and follows from \cite{BMR} and \cite{BMR1}.

\begin{coro}\label{mutDynkin}
Let $\Delta$ be a connected and  acyclic quiver. The classes of quivers obtained of $\Delta$ by mutations
coincide with the classes of quivers of the cluster tilted algebras of type
$\Delta$. Moreover, if $\Delta$ is of Dynkin type then there is a finite number of the mentioned classes.
\end{coro}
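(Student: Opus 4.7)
The plan is to derive the corollary directly from the two cited results, treating it as a bookkeeping statement about the exchange graph of tilting objects in the cluster category. The first step is to set up the correspondence: by the BMR theorem recalled just before Theorem \ref{equivCGamma}, every cluster tilted algebra of type $\overline{\Delta}$ arises as $\textrm{End}_{\mathcal{C}}(T)^{op}$ for some tilting object $T$ in the cluster category $\mathcal{C}$ of $H = k\Delta$, and conversely every such endomorphism algebra is a cluster tilted algebra. In particular, taking $T$ to be (the image in $\mathcal{C}$ of) the sum of the indecomposable projective $H$-modules recovers $H$ itself, so the quiver $\Delta$ is among the quivers of cluster tilted algebras of type $\overline{\Delta}$.

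Next, I would invoke BMR1 (Theorem 1.3, cited in the subsection on tilting objects): given an almost complete tilting object $\overline{T}$ with complements $M$ and $M^{*}$, the quiver of $\Gamma' = \textrm{End}_{\mathcal{C}}(\overline{T}\oplus M^{*})^{op}$ is obtained from the quiver of $\Gamma = \textrm{End}_{\mathcal{C}}(\overline{T}\oplus M)^{op}$ by Fomin--Zelevinsky mutation at the vertex $x$ corresponding to the summand $M$. Since any two tilting objects in $\mathcal{C}$ are connected by a finite sequence of such single-summand exchanges (the exchange graph of tilting objects in $\mathcal{C}$ is connected, a further consequence of BMR1), any two quivers of cluster tilted algebras of type $\overline{\Delta}$ are related by a sequence of mutations. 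Combined with the observation that $\Delta$ itself is one such quiver, this shows that the mutation class of $\Delta$ coincides precisely with the set of quivers of cluster tilted algebras of type $\overline{\Delta}$.

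For the finiteness assertion, I would use the description of the fundamental domain. When $\Delta$ is Dynkin, $H$ is representation-finite, so $\textrm{ind}\,H$ is a finite set and hence the set $\mathcal{S} = \textrm{ind}(\textrm{mod}\,H \vee H[1])$, which parametrizes $F$-orbits of indecomposables in $\mathcal{D}$, is finite. Consequently $\textrm{ind}\,\mathcal{C}$ is finite, and any tilting object $T$ in $\mathcal{C}$ is a direct sum of a bounded number of pairwise non-isomorphic indecomposables chosen from this finite set. Up to isomorphism there are therefore only finitely many tilting objects, hence only finitely many isomorphism classes of cluster tilted algebras of type $\overline{\Delta}$, and by the first part only finitely many quivers in the mutation class of $\Delta$.

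There is no genuine obstacle here: the corollary is a straightforward packaging of the BMR equivalence and the BMR1 mutation description, with the Dynkin finiteness reduced to finiteness of $\textrm{ind}\,\mathcal{C}$. The only care required is to keep the two directions of the equivalence (mutation class $\subseteq$ cluster tilted quivers, and cluster tilted quivers $\subseteq$ mutation class) cleanly separated when citing BMR1.
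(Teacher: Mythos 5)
Your proof is correct and is essentially the paper's own approach: the paper supplies no argument for this corollary, asserting only that it ``follows from \cite{BMR} and \cite{BMR1}'', and your write-up is precisely the natural expansion of that assertion (realizing $\Delta$ itself as the quiver of the canonical tilting object, transporting quiver mutations to exchanges of tilting objects via the mutation theorem of \cite{BMR1}, and reducing the Dynkin finiteness claim to finiteness of $\mathrm{ind}\,\mathcal{C}$). One small citation point: the connectedness of the exchange graph of tilting objects in $\mathcal{C}$, which your second inclusion genuinely needs and which you attribute to \cite{BMR1}, is in fact proved in \cite{BMRRT} (Proposition 3.5 there); this is a bibliographic detail rather than a mathematical gap.
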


Now, we are in conditions to present Theorem B.

\begin{teo}\label{clusterppal}
Let ${\Delta}$ be a Dynkin quiver and let $\Gamma$ be a cluster tilted algebra
of type $\overline{\Delta}$. Let $r_{\Gamma}$ be  the nilpotency index of $\Re(\emmG)$. Then the  following conditions hold.
\begin{enumerate}
\item[(a)] If $\overline{\Delta}=A_n$, then $r_{\Gamma}=n$ for $n\geq 1$.
\item[(b)] If $\overline{\Delta}=D_n$, then $r_{\Gamma}=2n-3$ for $n\geq 4$.
\item[(c)] If $\overline{\Delta}=E_6$, then $r_{\Gamma}=11$.
\item[(d)] If $\overline{\Delta}=E_7$, then $r_{\Gamma}=17.$
\item[(e)] If $\overline{\Delta}=E_8$, then $r_{\Gamma}=29.$
\end{enumerate}
\end{teo}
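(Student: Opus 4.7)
The plan is to deduce the theorem directly from Theorem \ref{nilcluster} together with Theorem \ref{ppalher} and Corollary \ref{mutDynkin}. The guiding observation is that the nilpotency index $r_{\Gamma}$ is invariant under the mutation operation on cluster tilted algebras, so it only depends on the mutation class of the quiver of $\Gamma$, and for each Dynkin type there is a canonical representative of this class whose nilpotency index is already known, namely the hereditary algebra $H=k\Delta$ itself.

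First, I would verify that $H=k\Delta$ is itself a cluster tilted algebra of type $\overline{\Delta}$. Indeed, viewing $H$ as a tilting object in the cluster category $\C$ of $H$, the equivalence of Theorem \ref{equivCGamma} gives $\mbox{End}_{\C}(H)^{op}\simeq H$, so $H$ belongs to the class of cluster tilted algebras of type $\overline{\Delta}$. Since the nilpotency index of $\Re(\emmH)$ is by Theorem \ref{ppalher} precisely the integer asserted in the statement for each Dynkin type, we therefore know the theorem for the particular cluster tilted algebra $H$.

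Next, let $\Gamma$ be an arbitrary cluster tilted algebra of type $\overline{\Delta}$. By Corollary \ref{mutDynkin}, the quivers of the cluster tilted algebras of type $\overline{\Delta}$ form a single finite mutation class, so there is a finite sequence of mutations connecting $Q_{H}=\Delta$ to $Q_{\Gamma}$. Each such mutation step corresponds to replacing an almost complete tilting object $\overline{T}$ together with one of its complements $M$ by $\overline{T}$ together with the other complement $M^{*}$, passing from the cluster tilted algebra $\mbox{End}_{\C}(\overline{T}\oplus M)^{op}$ to $\mbox{End}_{\C}(\overline{T}\oplus M^{*})^{op}$. Applying Theorem \ref{nilcluster} at every step of this finite chain, the nilpotency index does not change, and consequently $r_{\Gamma}=r_{H}$.

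The actual work of this theorem was already done in Theorem \ref{nilcluster} (the mutation invariance, which rested on Lemmas \ref{ra=r'a} and \ref{rx=r'y}) and in Theorem \ref{ppalher} (the concrete hereditary computation). The only minor subtlety worth checking carefully is the first step, namely that $H$ lies in the mutation class of cluster tilted algebras of type $\overline{\Delta}$ and that the number $r_{H}$ computed in Theorem \ref{ppalher} does coincide with the number $r_{\Gamma}$ defined here via Notation \ref{numu}; both reductions are formal since $r$ is defined intrinsically from the module category. Combining the three ingredients then yields $r_{\Gamma}=r_{H}$, which gives the five numerical values listed in parts (a)--(e) and completes the proof.
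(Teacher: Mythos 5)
Your proposal is correct and follows essentially the same route as the paper: reduce to the hereditary algebra $H=k\Delta$ via the mutation connectivity of Corollary \ref{mutDynkin}, apply the mutation invariance of Theorem \ref{nilcluster} along the finite chain of mutations, and read off the values from Theorem \ref{ppalher}. Your extra verification that $H$ itself is a cluster tilted algebra of type $\overline{\Delta}$ (i.e.\ $\mathrm{End}_{\C}(H)^{op}\simeq H$, which really follows from $\mathrm{Ext}^2_H(DH,H)=0$ rather than from Theorem \ref{equivCGamma}) is a point the paper leaves implicit, but it does not change the argument.
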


\begin{proof}
Let $\Gamma\simeq kQ_{\Gamma}/I_{\Gamma}$ be a cluster tilted algebra of type $\overline{\Delta}$,
where ${\Delta}$ is a Dynkin quiver and let $H$ be the hereditary algebra $H=k\Delta$.
Since $H$ is representation-finite, then so is $\Gamma$.

Let $r_{H}$ and $r_{\Gamma}$ the nilpotency indices of $\Re(\mbmH)$ and $\Re(\mbmG)$,
respectively. We claim that $r_{\Gamma}=r_{H}$.
In fact, by Corollary \ref{mutDynkin}, we can change the algebra $\Gamma$ into
the algebra $H$ by a finite sequence of mutations of the quiver $Q_{\Gamma}$.
By Theorem \ref{nilcluster}, we have that $r_{\Gamma}=r_{H}$ where $r_{H}$ takes the
values given in Theorem \ref{ppalher}. Hence we prove the statement.
\end{proof}

In \cite{R}, C. Ringel proved that all self-injective cluster tilted algebras
are representation-finite. Furthermore, the author showed that this particular algebras are cluster tilted algebras of type $D_n$, with $n\geq 3$ (considering $D_3=A_3$).

The next result follows immediately from Theorem \ref{clusterppal}.

\begin{coro}
Let $\Gamma$ be a self-injective cluster tilted
algebra. Then, the nilpotency index of $\Re(\emmG)$ is $2n-3$, where
$n$ is the number of vertices of the quiver $Q_{\Gamma}$.
\end{coro}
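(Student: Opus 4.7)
The plan is to combine Ringel's structural classification of self-injective cluster tilted algebras with Theorem \ref{clusterppal}. The work has essentially already been done; the corollary is a direct specialization, so the proposal is mostly a matter of citing the correct inputs and verifying the small numerical case.

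First, I would invoke the theorem of Ringel \cite{R}: every self-injective cluster tilted algebra is representation-finite and, more precisely, is a cluster tilted algebra of type $D_n$ for some $n \geq 3$, under the convention $D_3 = A_3$. This reduces the problem to reading off the nilpotency index for cluster tilted algebras whose Dynkin type is $D_n$, and it also pins down that the integer $n$ that appears in the statement coincides with the number of vertices of $Q_\Gamma$, because mutation preserves the number of vertices and $k\Delta$ has $n$ vertices when $\Delta = D_n$ (or $A_3$).

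Next, I would split into two cases according to $n$. For $n \geq 4$, Theorem \ref{clusterppal}(b) directly yields $r_\Gamma = 2n-3$. For the remaining case $n = 3$, one uses the convention $D_3 = A_3$ and applies Theorem \ref{clusterppal}(a) to conclude $r_\Gamma = 3$; a quick check confirms $3 = 2\cdot 3 - 3$, so the uniform formula $r_\Gamma = 2n-3$ holds in this case as well.

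There is no real obstacle here, since the substantive content has been established in Theorem \ref{nilcluster} and Theorem \ref{clusterppal}; the only thing to be careful about is the low-rank boundary case $n=3$, where the symbol $D_3$ must be interpreted as $A_3$, and to confirm that the two formulas from Theorem \ref{clusterppal} agree there. Assembling these observations gives the corollary in two or three lines.
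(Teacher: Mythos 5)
Your proposal is correct and follows exactly the paper's route: the corollary is stated there as an immediate consequence of Ringel's classification of self-injective cluster tilted algebras as those of type $D_n$, $n\geq 3$ (with $D_3=A_3$), combined with Theorem \ref{clusterppal}. Your explicit check of the boundary case $n=3$, where part (a) gives $r_\Gamma=3=2\cdot 3-3$ in agreement with the $D_n$ formula, is a careful touch the paper leaves implicit.
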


We end up this section with a remark on Coxeter numbers. We refer the reader to \cite{FR} for a detailed account on
Root Systems and Coxeter Groups.

\begin{obs}
\emph{It is known that the theory of cluster algebras has many connections with different
areas in mathematics. In particular, there exists a connection with Root Systems
and with Coxeter Groups.}

\emph{An element in a Coxeter group $W$ is called a coxeter element if it is the product of
all simple reflections and moreover its order is called the
coxeter number of $W$. On the other hand, the coxeter number
is related with the highest root in its corresponding root system.}

\emph{For a finite irreducible Coxeter group $W$, there is a corresponding
root system $\Phi$ of  Dynkin type $\Delta$. Now, if  $\Gamma$ is a cluster
tilted algebra of $\Delta$ type then it is known that the cardinal of the set of positive
roots of $\Phi$ coincide with the cardinal of $\mbox{ind}\,\Gamma$.  Moreover,
the coxeter number of $W$ is exactly one more than the nilpotency index of
$\Re(\mbox{mod}\,\Gamma)$.}
\end{obs}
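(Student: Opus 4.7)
The remark bundles two assertions, and the plan is to verify each by matching three numerical lists: the indecomposable count, the number of positive roots, and the Coxeter number.

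First, I would establish the bijection $|\Phi^+|=|\mathrm{ind}\,\Gamma|$. By Gabriel's theorem, for $H=k\Delta$ with $\Delta$ Dynkin, the dimension-vector map induces a bijection between $\mathrm{ind}(\mathrm{mod}\,H)$ and $\Phi^+$, so $|\mathrm{ind}(\mathrm{mod}\,H)|=|\Phi^+|$. Next I would unwind the structure of the fundamental domain $\mathcal S=\mathrm{ind}(\mathrm{mod}\,H\vee H[1])$ recalled in the preliminaries: the indecomposable objects of $\C$ are parametrized by $\mathcal S$, and $|\mathcal S|=|\mathrm{ind}(\mathrm{mod}\,H)|+n$, where $n$ is the number of simple $H$-modules (one extra object $P_i[1]$ per indecomposable projective). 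Theorem \ref{equivCGamma} gives the equivalence $\C/\mathrm{add}(\tau T)\simeq \mathrm{mod}\,\Gamma$, so $\mathrm{ind}\,\Gamma$ is in bijection with the indecomposables of $\C$ that are not summands of $\tau T$; since $T$ is a tilting object with exactly $n$ non-isomorphic indecomposable summands, exactly $n$ objects are removed. Thus $|\mathrm{ind}\,\Gamma|=(|\Phi^+|+n)-n=|\Phi^+|$, which gives the first identity.

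Second, I would verify $h(W)=r_\Gamma+1$ type by type, using Theorem \ref{clusterppal} for the right-hand side and the classical table of Coxeter numbers from the theory of finite reflection groups (see \cite{FR}) for the left-hand side. Concretely, Theorem \ref{clusterppal} yields $r_\Gamma=n$, $2n-3$, $11$, $17$, $29$ in types $A_n$, $D_n$, $E_6$, $E_7$, $E_8$ respectively, while the Coxeter numbers are $h(A_n)=n+1$, $h(D_n)=2n-2$, $h(E_6)=12$, $h(E_7)=18$, $h(E_8)=30$. A direct comparison shows $h=r_\Gamma+1$ in every case. Because Theorem \ref{nilcluster} guarantees that $r_\Gamma$ depends only on the Dynkin type $\overline{\Delta}$ (not on the particular cluster tilted algebra chosen within the mutation class), this case check suffices for every representation-finite cluster tilted algebra.

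Essentially no serious obstacle remains: both parts reduce to invoking results already available in the paper (Theorems \ref{equivCGamma} and \ref{clusterppal}) together with classical facts about Dynkin diagrams. The only mildly delicate point is to be careful in the first part that the equivalence of Theorem \ref{equivCGamma} is a bijection on isoclasses of indecomposables, so that the arithmetic $|\mathrm{ind}\,\Gamma|=|\mathcal S|-n$ is justified; this is immediate from the Krull--Schmidt property of $\C$ and the fact that each summand of $T$ contributes a single object to $\mathrm{add}(\tau T)$ after applying $\tau$.
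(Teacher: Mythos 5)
Your proposal is correct and follows exactly the route the paper intends: the remark is stated there without proof, and its content reduces to precisely the two ingredients you use, namely the equivalence of Theorem \ref{equivCGamma} (giving $|\mbox{ind}\,\Gamma|=(|\Phi^{+}|+n)-n=|\Phi^{+}|$ via Gabriel's theorem and the fundamental domain $\mathcal{S}$) and Theorem \ref{clusterppal} compared case by case against the classical Coxeter numbers $h(A_n)=n+1$, $h(D_n)=2n-2$, $h(E_6)=12$, $h(E_7)=18$, $h(E_8)=30$. Both of your counts are accurate, and your appeal to Theorem \ref{nilcluster} to see that $r_{\Gamma}$ depends only on the Dynkin type is exactly the justification needed, so nothing further is required.
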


\section{On composition of irreducible morphisms}

In this section, we establish the relationship  between the composition of irreducible
morphisms between indecomposable modules and the power of the radical
where it belongs.

We start with  the following proposition.

\begin{prop}\label{dimhomc}
Let $\Gamma$ be a representation-finite cluster tilted algebra. Let $M$
and $N$ be indecomposable $\Gamma$-modules such that $\emph{Irr}_{\Gamma}(M,N)\neq 0$.
Then, $\emph{dim}_k(\emph{Hom}_{\Gamma}(M,N))=1$. In particular, $\Re_{\Gamma}^2(M,N)=0$.
\end{prop}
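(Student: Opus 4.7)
The plan is to transfer the question to the cluster category $\C$, where Hom spaces between indecomposables of Dynkin type admit a uniform one-dimensional bound. Since $\Gamma$ is representation-finite cluster tilted, Corollary \ref{mutDynkin} implies that $\Gamma$ is of Dynkin type $\overline{\Delta}$, so $\C$ is the cluster category of the representation-finite hereditary algebra $H = k\Delta$.

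Using the equivalence $\mbox{Hom}_\C(T, -): \C/\mbox{add}(\tau T) \to \mbmG$ of Theorem \ref{equivCGamma}, I would lift $M$ and $N$ to indecomposables $\til{M}, \til{N}$ in the fundamental domain $\mathcal{S}$ of $\C$. The general Hom formula (\ref{morf}) then specialises to
$$\mbox{Hom}_\C(\til{M}, \til{N}) = \mbox{Hom}_\D(\til{M}, \til{N}) \oplus \mbox{Hom}_\D(F^{-1}\til{M}, \til{N}),$$
in which, since $H$ is representation-finite, at most one summand is nonzero (as recorded in the preliminaries). The crucial input is the classical fact that for $\Delta$ Dynkin and $H = k\Delta$, $\dim_k \mbox{Hom}_H(X, Y) \leq 1$ for all indecomposable $H$-modules $X, Y$, a consequence of Gabriel's theorem and the Euler bilinear form restricted to positive roots. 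Since indecomposables in $\D = \D^b(\mbmH)$ are shifts of indecomposable $H$-modules, this extends to $\dim_k \mbox{Hom}_\D \leq 1$ between indecomposables, and hence $\dim_k \mbox{Hom}_\C(\til{M}, \til{N}) \leq 1$ by the display above. Theorem \ref{equivCGamma} realises $\mbox{Hom}_\Gamma(M, N)$ as the quotient of $\mbox{Hom}_\C(\til{M}, \til{N})$ by morphisms factoring through $\mbox{add}(\tau T)$, so $\dim_k \mbox{Hom}_\Gamma(M, N) \leq 1$; the hypothesis that $\mbox{Irr}_\Gamma(M, N) \neq 0$ supplies a nonzero irreducible $f: M \to N$, forcing equality.

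For the final assertion $\Re_\Gamma^{2}(M, N) = 0$: applying the same bound to $\mbox{End}_\Gamma(M)$ gives $\dim_k \mbox{End}_\Gamma(M) = 1$, so $M$ is a brick and admits no irreducible endomorphism; thus $M \not\simeq N$, and $\Re_\Gamma(M, N) = \mbox{Hom}_\Gamma(M, N) = k \cdot f$. Bautista's theorem ensures $f \notin \Re_\Gamma^{2}(M, N)$, whence $\Re_\Gamma^{2}(M, N) = 0$. The main obstacle is the Dynkin $\dim \leq 1$ bound for Hom spaces over $H$, which must be imported as a classical fact; the remainder is a direct application of the cluster-category equivalence and Bautista's theorem.
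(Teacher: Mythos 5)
Your reduction to the cluster category is fine as far as it goes, but the proof collapses at the step you yourself flag as the crucial input: it is \emph{not} a classical fact that $\dim_k\mbox{Hom}_H(X,Y)\leq 1$ for all indecomposable modules $X,Y$ over a representation-finite hereditary algebra $H=k\Delta$. This holds in type $A_n$, but fails in types $D$ and $E$. Concretely, take $\Delta=D_4$ with the three arrows oriented into the central vertex $c$, so that $P_c=S_c$, and let $M$ be the indecomposable whose dimension vector is the highest root $\alpha_1+\alpha_2+2\alpha_c+\alpha_4$ (Gabriel's theorem). Then $\mbox{Hom}_H(P_c,M)\cong M_c\cong k^2$; in type $E_8$ such Hom spaces can have dimension as large as $6$, the largest coefficient of the highest root. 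Since $H$ itself is a cluster tilted algebra of type $\Delta$ (the tilting object $T$ given by the image of $H$ in $\C$ has $\mbox{End}_{\C}(T)^{op}\cong H$), this even produces a representation-finite cluster tilted algebra with two indecomposables whose Hom space is $2$-dimensional; so your intermediate claim that $\dim_k\mbox{Hom}_{\Gamma}(M,N)\leq 1$ for \emph{all} pairs of indecomposables is false. The structural symptom of the error is that your argument never genuinely uses the hypothesis $\mbox{Irr}_{\Gamma}(M,N)\neq 0$ --- it enters only at the end, to upgrade $\leq 1$ to $=1$ --- whereas the conclusion of the proposition really depends on it: in the $D_4$ example there is no irreducible morphism $P_c\fle M$, which is why it does not contradict the statement.

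The paper's proof uses the hypothesis from the start, and replaces the (false) dimension bound by the \emph{length} property of $\Gamma(\D)$. Given the irreducible morphism $f:M\fle N$ and a morphism $g$ linearly independent from $f$, the fact that $\dim_k\mbox{Irr}_{\Gamma}(M,N)=1$ for a representation-finite algebra forces $g\in \Re^{n}(M,N)\setminus\Re^{n+1}(M,N)$ for some $n\geq 2$. Lifting $f$ and $g$ through the equivalence of Theorem \ref{equivCGamma} to morphisms $\til{f},\til{g}:\til{M}\fle\til{N}$ in $\C$ not factoring through $\mbox{add}(\tau T)$, and then to $\D$ (only one summand of $\mbox{Hom}_{\C}(\til{M},\til{N})=\mbox{Hom}_{\D}(F^{-1}\til{M},\til{N})\oplus\mbox{Hom}_{\D}(\til{M},\til{N})$ is non-zero because $H$ is representation-finite), one obtains in $\Gamma(\D)\cong\mathbb{Z}\Delta$ an arrow and a parallel path of length $n\geq 2$ between the same indecomposable objects. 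This contradicts the proposition at the beginning of Section 2, which says that $\Gamma(\D)$ is a component with length. If you want to repair your write-up, substitute this parallel-path argument for the Hom bound; your final step, deducing $\Re_{\Gamma}^{2}(M,N)=0$ from $\mbox{Hom}_{\Gamma}(M,N)=k\cdot f$ with $f$ irreducible via Bautista's theorem, is correct and agrees with the paper.
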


\begin{proof}
Let $\Gamma$ be a representation-finite cluster tilted algebra.
Then $\Gamma=\mbeCT$, where $\C=\D/F$ is the cluster category of a representation-finite
hereditary algebra $H$ and $T$ a tilting object in $\C$.

Since $\mbox{Irr}_{\Gamma}(M,N)\neq 0$, then there exists an irreducible morphism $f:M\fle N$.
We claim that all the morphisms $g:M\fle N$ in $\mbmG$ are $k$-linearly dependent with $f$.
In fact, suppose that there exists a non-zero morphisms $g:M\fle N$ $k$-linearly independent with $f$.
Since $\Gamma$ is representation-finite, we know that  $\mbox{dim}_k(\mbox{Irr}_{\Gamma}(M,N))= 1.$
Hence, $g$ is not irreducible. Then $g\in \Re^2(M,N)$. Moreover, there is a integer $n\geq 2$ such that $g\in  \Re^n(M,N)\backslash \Re^{n+1}(M,N).$
Therefore, there exist morphisms $\til{f},\,\til{g}:\til{M}\fle \til{N}$ in the cluster
category $\C$ such that they do not factor through $\mbox{add}(\tau\,T)$. Furthermore,
these morphisms are induced by morphisms in the derived category. Moreover,
since $\mbox{Hom}_{\C}(\til{M},\til{N})=\mbox{Hom}_{\D}(F^{-1}M,N)\oplus \mbox{Hom}_{\D}(M,N),$ and
only one of this summands are non-zero, we can deduce the existence of an irreducible morphism in  $\Gamma(\D)$
and a path of length $n$, with $n\geq 2$, between the same objects, contradicting the fact that $\Gamma(\D)$
is a quiver with length.

Therefore, there is not a morphism  $g:M\fle N$  in $\mbmG$ linearly independent with $f$.
Then, $\mbox{dim}_k(\mbox{Hom}_{\Gamma}(M,N))=1$. Moreover, since $f$ is irreducible,
we conclude that $\Re_{\Gamma}^2(M,N)=0$.
\end{proof}

\begin{teo}
Let $\Gamma$ be a representation-finite cluster tilted algebra. Consider the  irreducible morphisms
$h_i:X_i\fle X_{i+1}$, with $X_i\in \emiG$ for $1\leq i\leq m$.
Then $h_{m}\dots h_{1}\in \Re^{m+1}(X_{1}, X_{m+1})$ if and only if $ h_{m}\dots h_{1} = 0$.
\end{teo}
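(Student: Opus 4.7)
The reverse implication is immediate since $0 \in \Re^{m+1}(X_1,X_{m+1})$. For the forward direction, I would argue by contradiction, in the same spirit as Proposition \ref{dimhomc}. Assume $f := h_m \cdots h_1 \neq 0$ while $f \in \Re^{m+1}(X_1,X_{m+1})$. Since $\Gamma$ is representation-finite, $\Re^N(\mbmG)=0$ for some $N$, so there exists an integer $n \geq m+1$ with $f\in \Re^n(X_1,X_{m+1})\setminus \Re^{n+1}(X_1,X_{m+1})$.

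The first task is to extract from this depth-$n$ datum a genuine non-zero path of $n$ irreducible morphisms through indecomposables from $X_1$ to $X_{m+1}$. By Proposition \ref{aus}(ii), write $f=\sum_j g_j f_j$ with $f_j\in \Re(X_1,Y_j)$, at least one $f_j$ irreducible, and each $g_j:Y_j\fle X_{m+1}$ a sum of compositions of $n-1$ irreducible morphisms between indecomposables. The contributions coming from non-irreducible $f_j$'s lie in $\Re^{n+1}$, so modulo $\Re^{n+1}$ the morphism $f$ is a finite sum of length-$n$ compositions of irreducible morphisms between indecomposables from $X_1$ to $X_{m+1}$. Since $f\notin \Re^{n+1}$, at least one such composition $k_n\cdots k_1:X_1\fle X_{m+1}$ must be non-zero.

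Now comes the derived-category step. Via the equivalence $\C/\mbox{add}(\tau T)\simeq \mbmG$ of Theorem \ref{equivCGamma}, each irreducible $h_i$ and $k_i$ in $\mbmG$ lifts to an irreducible in $\C$ which does not factor through $\mbox{add}(\tau T)$, and non-zero compositions remain non-zero. So we obtain non-zero elements $\til f,\,\til p\in \mbox{Hom}_\C(\til X_1,\til X_{m+1})$ arising from paths of irreducible morphisms of lengths $m$ and $n$ respectively; by Remark \ref{caminoC}, each comes from a non-zero path of irreducible morphisms between indecomposable objects in $\D$ of the same length. Because $H$ is representation-finite, at most one of the summands $\mbox{Hom}_\D(\til X_1,\til X_{m+1})$ and $\mbox{Hom}_\D(F^{-1}\til X_1,\til X_{m+1})$ of $\mbox{Hom}_\C(\til X_1,\til X_{m+1})$ is non-zero, so the non-zero projections of $\til f$ and $\til p$ lie in a common $\mbox{Hom}_\D$-summand. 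This produces two parallel non-zero paths of distinct lengths $m \neq n$ between the same pair of indecomposables in $\D$, contradicting the fact established at the start of this section that $\Gamma(\D)\simeq \mathbb{Z}\Delta$ is a component with length.

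The principal technical obstacle I anticipate lies in executing the last step rigorously: one must keep track of the $F$-shifts when pulling back irreducibles and their compositions from $\C$ to $\D$, in order to guarantee that the two paths really do project to the same $\mbox{Hom}_\D$-summand rather than being separated by a twist of the autoequivalence $F$.
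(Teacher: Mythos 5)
Your proposal is correct, but it follows a genuinely different route from the paper's proof. The paper's argument is much shorter and leans on an imported theorem: assuming $h_m\cdots h_1\in\Re^{m+1}(X_1,X_{m+1})$ is non-zero, it invokes \cite[Theorem 5.1]{CLT} to produce irreducible morphisms $f_i:X_i\fle X_{i+1}$ with $f_m\cdots f_1=0$, and then applies Proposition \ref{dimhomc} to get $\mbox{dim}_k(\mbox{Hom}_\Gamma(X_i,X_{i+1}))=1$, so that $h_i=\lambda_i f_i$ with $\lambda_i\neq 0$ and hence $h_m\cdots h_1=\left(\prod_i\lambda_i\right)f_m\cdots f_1=0$, a contradiction. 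You avoid \cite[Theorem 5.1]{CLT} entirely: from the depth $n\geq m+1$ of the composite you use Proposition \ref{aus} (together with Bautista's characterization, which guarantees the terms with non-irreducible $f_j$ fall into $\Re^{n+1}$) to extract a non-zero composite of $n$ irreducible morphisms parallel to $h_m\cdots h_1$, and then you re-run, for two parallel paths of distinct lengths $m\neq n$, exactly the argument the paper uses to prove Proposition \ref{dimhomc}: lift both paths through the equivalence of Theorem \ref{equivCGamma} to $\C$, then via Remark \ref{caminoC} to $\D$, and use that for representation-finite $H$ only one summand of $\mbox{Hom}_{\C}(\til{X}_1,\til{X}_{m+1})$ is non-zero to place both paths between the same pair of objects in $\Gamma(\D)\simeq\mathbb{Z}\Delta$, contradicting the component-with-length property. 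The $F$-shift bookkeeping you flag as the main obstacle is resolved precisely by that dichotomy, which is the same device the paper employs in Proposition \ref{dimhomc} and in Lemma \ref{rx=r'y}, so your argument does close. In terms of trade-offs: the paper's proof is a few lines but rests on a nontrivial external result from \cite{CLT}, whereas yours is self-contained within the paper's own machinery, at the cost of duplicating the derived-category argument rather than quoting Proposition \ref{dimhomc} as a black box; in effect you reprove that proposition in the stronger form that any two parallel non-zero composites of irreducible morphisms between indecomposable $\Gamma$-modules must have equal length.
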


\begin{proof}
If $ h_{m}\dots h_{1} = 0$, then clearly we have that $h_{m}\dots h_{1}\in \Re^{m+1}(X_{1}, X_{m+1})$.

Conversely. Assume that $h_{m}\dots h_{1}\in \Re^{m+1}(X_{1}, X_{m+1})$ and $ h_{m}\dots h_{1} \neq 0.$
Then, by \cite[Theorem 5.1]{CLT} there are irreducible morphisms $f_i:X_i\fle X_{i+1}$,
for $1\leq i\leq m$ such that $ f_{m}\dots f_{1} = 0.$ By Proposition \ref{dimhomc}, we have that
$\mbox{dim}_k(\mbox{Hom}_{A}(X_i,X_{i+1}))=1,$ for each $i$. Hence, $f_i$ and $h_i$
are $k$-linearly dependent, that is, $h_i=\lambda_if_i$ where $\lambda_i$ is a non-zero element of $k$.
Thus, $ h_{m}\dots h_{1}= \lambda f_{m}\dots f_{1}=0,$ which is a contradiction to our assumption.
Therefore, $ h_{m}\dots h_{1} = 0$ proving the result.
\end{proof}

\begin{obs}
We observe that if  we consider a cluster tilted algebra of type $A_n$ or $D_n$, then
the results of this article can be proven  with the geometric approach developed for cluster categories and cluster tilted algebras of type $A_n$ and $D_n$ given
in \cite{CCS} and in \cite{S}, respectively. For a detail account on this approach see \cite{VG}.
\end{obs}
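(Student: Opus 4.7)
The plan is to unfold what ``proven with the geometric approach'' means and explain how each of Theorems A, B and C becomes transparent once one replaces module-theoretic objects by their combinatorial counterparts. For type $A_n$, I would use the model of Caldero--Chapoton--Schiffler: the cluster category $\C$ of $kA_n$ is identified with the additive category of diagonals of a regular $(n+3)$-gon $\Pi$, tilting objects correspond to triangulations $\Delta$ of $\Pi$, and the cluster-tilted algebra $\Gamma=\mbox{End}_{\C}(T)^{op}$ is recovered from the dual quiver of $\Delta$ with its canonical potential. For type $D_n$, I would use Schiffler's extension: diagonals of a once-punctured $n$-gon together with tagged arcs at the puncture. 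In both models the Auslander--Reiten translation is realized by a rotation of the ambient polygon, and irreducible morphisms in $\C$ between indecomposables correspond to elementary ``pivot'' moves sending a diagonal to an adjacent one.

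Next, I would translate the module-theoretic input of Theorem \ref{ppalher} and of \cite[Theorem A]{C} into this language. Under the equivalence of Theorem \ref{equivCGamma}, each indecomposable summand $T_a$ of $T$ determines three distinguished diagonals in $\Pi$: the arc $T_a$ itself gives $P_a$, the arc $\tau^{2}T_a$ gives $I_a$, and the simple $S_a$ corresponds to an arc of the triangulation $\Delta$ whose removal and reinsertion implements the mutation. The number $r_a=n_a+m_a$ of \ref{nilpo} then equals the total number of elementary pivots needed to rotate $T_a$ across $\Pi$ into $\tau^{2}T_a$ through the simple arc. Counting these rotations directly on the polygon gives the values $n$, $2n-3$, and the exceptional constants $11,17,29$ for the corresponding $A_n,D_n,E_6,E_7,E_8$ root systems; for the $E$-types no geometric model is available so the geometric proof covers only (a) and (b).

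With this dictionary in place, Theorem A becomes the statement that flipping a diagonal of $\Delta$ does not change the maximum number of pivots needed, because flipping is a local move that permutes the set of arcs without altering the cyclic distances that control $r_a$; the bijective correspondence of Lemmas \ref{ra=r'a} and \ref{rx=r'y} is visible as the evident symmetry exchanging $M$ and $M^{*}$. Theorem B then reduces, via Corollary \ref{mutDynkin}, to computing the invariant on one representative triangulation of each type, which is the hereditary case already covered by Theorem \ref{ppalher} and now has a one-line polygonal verification. For Theorem C, the geometric model makes the key ingredient Proposition \ref{dimhomc} automatic: the space $\mbox{Hom}_{\C}(\widetilde{M},\widetilde{N})$ between indecomposables has $k$-dimension equal to the number of intersections of the two arcs, which is at most one whenever an irreducible morphism exists, so any two compositions of irreducibles between the same endpoints are scalar multiples and vanish simultaneously.

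The main obstacle will be the $D_n$ case, not the $A_n$ case. In the punctured polygon one must handle tagged arcs carefully: the arcs incident to the puncture come in two flavours (plain and notched) and the $\tau$-orbits in $\mathcal{S}$ mix them, so the pivot move corresponding to an irreducible morphism occasionally changes the tag rather than the underlying arc. Verifying that the counting argument for $r_a$ is insensitive to this, and that $H$ being representation-finite still forces at most one of the two summands $\mbox{Hom}_{\D}(F^{-1}X,Y)$, $\mbox{Hom}_{\D}(X,Y)$ to be non-zero in the geometric description, is where the details become delicate; following \cite{VG} one can reduce this to a case analysis on how tags interact with the elementary pivots, after which the remaining arguments parallel the $A_n$ case verbatim.
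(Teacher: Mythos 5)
The statement you are proving is a remark, not a theorem: the paper offers no proof of it, only pointers to \cite{CCS}, \cite{S} and \cite{VG}, so your sketch must be judged on its own merits. Its broad outline (polygon model in type $A_n$, punctured polygon with tagged arcs in type $D_n$, rotation as $\tau$, pivots as irreducible morphisms, restriction of the geometric method to cases (a) and (b) of Theorem B) is consistent with what the remark asserts. However, two of your central claims have genuine gaps. First, in the Caldero--Chapoton--Schiffler model the crossing number of two arcs computes $\dim_k\mbox{Ext}^1_{\C}$, not $\dim_k\mbox{Hom}_{\C}$; since $F=\tau^{-1}[1]$ is killed in $\C$, one has $[1]\simeq\tau$ there, so $\dim_k\mbox{Hom}_{\C}(\til{M},\til{N})$ is the crossing number of $\til{M}$ with the \emph{rotated} arc $\tau^{-1}\til{N}$. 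Your justification of Proposition \ref{dimhomc} ("dimension equals number of intersections, at most one whenever an irreducible morphism exists") is therefore wrong as stated; moreover in the punctured polygon two arcs can cross twice, so even after the correction the dimension-one claim is not automatic in type $D_n$. Finally, what Proposition \ref{dimhomc} needs is $\dim_k\mbox{Hom}_{\Gamma}(M,N)=1$, i.e.\ morphisms in $\C$ modulo those factoring through $\mbox{add}(\tau T)$, and your dictionary never addresses this extra quotient.

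Second, your argument for Theorem A is essentially circular. The arc realizing the simple $S_a$ is $\tau$ applied to the flip of $T_a$ \emph{inside the ambient triangulation}, so it does change when $M$ is flipped to $M^*$; the assertion that the flip "does not alter the cyclic distances that control $r_a$" is precisely the content of Lemmas \ref{ra=r'a} and \ref{rx=r'y}, not an evident symmetry exchanging $M$ and $M^*$. Worse, "the number of pivots needed" is not well defined inside $\C$: the AR quiver of the cluster category is the finite quotient $\mathbb{Z}\Delta/F$, which contains cyclic paths, so parallel pivot paths can have different lengths depending on how they wind around this cylinder. Lengths become well defined only after lifting to the derived category, and deciding which of the two summands $\mbox{Hom}_{\D}(F^{-1}X,Y)$, $\mbox{Hom}_{\D}(X,Y)$ is non-zero is exactly the delicate step in the paper's proof of Lemma \ref{rx=r'y}. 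You acknowledge this issue only for tagged arcs in type $D_n$, but it is equally present in type $A_n$; any complete geometric proof must handle it there as well.
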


\end{document}